\numberwithin{equation}{section}
\newtheorem{theorem}{Theorem}[section]
\newtheorem{corollary}{Corollary}[section]
\newtheorem{lemma}[theorem]{Lemma}
\newtheorem{proposition}[theorem]{Proposition}
\theoremstyle{definition}
\newtheorem{definition}[theorem]{Definition}
\newcommand{\rn}{\mathbb R^N}
\newcommand{\R}{\mathbb R}
\newcommand{\Hm}{{\mathcal H}^{N-1}}
\newcommand{\h}{\mathcal{H}^{n-1}}
\newcommand{\Ao}{A_{opt}}
\title[Quantitative Faber-Krahn inequality for the Robin Laplacian]{The quantitative Faber-Krahn inequality for the  Robin Laplacian}
\author[D. Bucur, V. Ferone, C. Nitsch, C. Trombetti]{Dorin Bucur, Vincenzo Ferone, Carlo Nitsch, Cristina Trombetti}
\date{}                                           
\address{\vskip1cm\noindent Dorin Bucur \hfill\break\vskip-.2cm
\noindent Institut Universitaire de France and Laboratoire de Math\'ematiques, CNRS UMR 5127,
Universit\' e Savoie Mont Blanc, Campus Scientifique, 73376 Le-Bourget-Du-Lac, France. \hfill\break\vskip-.2cm
\noindent e-mail: {\tt dorin.bucur@univ-savoie.fr}
\hfill\break\vskip-.2cm
\noindent Vincenzo Ferone, Carlo Nitsch, Cristina Trombetti\hfill\break\vskip-.2cm
\noindent Dipartimento di Matematica e Applicazioni ``R. Caccioppoli'', Universit\`{a}
degli Studi di Napoli ``Federico II'', Complesso Universitario Monte S. Angelo, via Cintia
- 80126 Napoli, Italy. \hfill\break\vskip-.2cm
\noindent e-mail: \tt ferone@unina.it; c.nitsch@unina.it;
cristina@unina.it}
\subjclass[2010]{35P15, 35J05, 47J30}
\keywords{Faber-Krahn inequality, Robin Laplacian, quantitative estimates}
\begin{document}

\maketitle

\begin{abstract}
We prove a quantitative form of the Faber-Krahn inequality for the first eigenvalue of the Laplace operator with Robin boundary conditions. The asymmetry term involves the square power of the Fraenkel asymmetry, multiplied by a constant depending on the Robin parameter, the dimension of the space and the measure of the set. 
\end{abstract}

\section{Introduction and Main result}

Let  $\beta >0$ and $\Omega \subseteq \R^N$ be an open, bounded, smooth, connected  set.     The first eigenvalue of the Laplace operator on $\Omega$ with Robin boundary conditions conditions is denoted by $\lambda_{1,\beta}(\Omega)$ and is the unique positive number for which the following equation has a  solution $u$ of constant sign
\begin{equation}\label{eigen_problem}
\left\{
\begin{array}{ll}
-\Delta u= \lambda_{1,\beta}(\Omega)  u& \mbox{in $\Omega$}\\\\
\dfrac{\partial u}{\partial \nu} +\beta \,u =0 & \mbox{on $\partial\Omega$.}
\end{array}
\right.
\end{equation}
Above $\nu$  denotes the outer unit normal to $\partial \Omega$. If $\Omega$ is not smooth, an appropriate definition of the eigenvalue is given in terms of minimal values of the associated  Rayleigh quotient, see Section 2.

The Faber-Krahn inequality, proved by Bossel  \cite{Bo2} in two dimensions and by Daners \cite{D1} in any dimension of the space, states among all Lipschitz domains with given volume, balls minimize the first Robin eigenvalue. This inequality has been extended to arbitrary open sets in \cite{BG10}. 
To be more precise, let $\Omega\subset \R^N$ be an open set of finite measure and $B$  a ball of the same volume as $\Omega$. Then
\begin{equation}
\label{FK}
\lambda_{1,\beta}(\Omega) \ge \lambda_{1,\beta}(B).
\end{equation}

The purpose of the present paper is to refine this result with a quantitative estimate, proving that the difference between the two terms in \eqref{FK}  controls some distance
of $\Omega$ to a ball with the same volume.

 The {\sl Fraenkel asymmetry}  of $\Omega$ is defined by (see for instance \cite{FMP08})
\begin{equation}
\label{fraenkel}
\mathcal A (\Omega)= \min \Big \{  \frac{|\Omega \Delta (x_0+B)|}{|B|}: x_0\in \R^N\Big \},
\end{equation}
where the symbol $\Delta$ stands for the symmetric difference between sets.

\medskip

The main result of this paper is the following quantitative inequality. 
\begin{theorem}\label{main}
Let $\Omega$ be an open set with finite measure. 
There exists a positive constant $c$ depending only on $N, \beta, |\Omega|$, such that 
\begin{equation}
\label{bfnt01}
\lambda_{1,\beta}(\Omega) - \lambda_{1,\beta}(B) \ge c  \mathcal A^2 (\Omega).
\end{equation}
\end{theorem}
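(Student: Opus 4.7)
The plan is to revisit the Bossel--Daners proof of \eqref{FK} and to upgrade it at the single point where the standard isoperimetric inequality is invoked, substituting the sharp quantitative isoperimetric inequality of Fusco--Maggi--Pratelli. The resulting gain is an integral, over the range of values of the first eigenfunction $u$, of the squared Fraenkel asymmetry of its superlevel sets $U_t=\{u>t\}$; the main work is then to convert this into a quadratic function of $\mathcal{A}(\Omega)$.

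By standard approximation I would reduce to the case of a smooth, bounded, connected $\Omega$ with $|\Omega|=|B|$ and a convenient fixed scale (since $c$ is allowed to depend on $N,\beta,|\Omega|$). Let $u>0$ be the normalized first eigenfunction, $M=\|u\|_\infty$, and $U_t=\{u>t\}$ for $t\in(0,M)$. The Bossel--Daners test-function argument produces, for a.e.\ $t$, a pointwise estimate of the form
\begin{equation}
\lambda_{1,\beta}(\Omega) \;\geq\; \Phi\bigl(|U_t|,\,P(U_t);\,\beta,N\bigr),
\end{equation}
where $\Phi$ is strictly monotone increasing in $P$ at fixed volume and is saturated by the radial Robin eigenfunction on the ball. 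Inserting the Fusco--Maggi--Pratelli inequality $P(U_t)\geq P(B_t^\ast)\bigl(1+c_N\mathcal{A}(U_t)^2\bigr)$, with $B_t^\ast$ the ball of volume $|U_t|$, into the monotonicity of $\Phi$ and integrating in $t$ against the appropriate measure yields
\begin{equation}
\lambda_{1,\beta}(\Omega)-\lambda_{1,\beta}(B) \;\geq\; c_1\int_0^M w(t)\,\mathcal{A}(U_t)^2\,dt,
\end{equation}
for an explicit nonnegative weight $w(t)$ read off from the radial eigenfunction on $B$.

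The main obstacle is the final step: relating the integrated level-set asymmetry to $\mathcal{A}(\Omega)$. I would prove a \emph{transfer lemma} of the following shape. Using a uniform $L^\infty$ bound on $u$ (Moser iteration adapted to the Robin boundary condition, with constants depending only on $N,\beta,|\Omega|$) one obtains $|\Omega\setminus U_t|\leq Ct$ on an interval $t\in(0,\tau)$ with $\tau=\tau(N,\beta,|\Omega|)>0$. Combined with the elementary estimate $|\mathcal{A}(A)-\mathcal{A}(A')|\leq C\,|A\,\Delta\,A'|/|A|$ for sets of comparable measure, this forces $\mathcal{A}(U_t)\geq \tfrac{1}{2}\mathcal{A}(\Omega)$ on a smaller subinterval $(0,\tau')$, with $\tau'$ depending also on $\mathcal{A}(\Omega)$. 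Plugging this into the integral, together with a quantitative lower bound on $w(t)$ as $t\to 0^+$, produces $\lambda_{1,\beta}(\Omega)-\lambda_{1,\beta}(B)\geq c\,\mathcal{A}(\Omega)^2$. The most delicate point is the behaviour of $w(t)$ near $t=0$: in the Dirichlet analogue $w$ degenerates there, but for the Robin problem the fact that $u$ does not vanish on $\partial\Omega$ should keep $w$ bounded below, which is precisely what enables the quadratic asymmetry gain to survive the transfer to $\Omega$. If this direct route proves too rigid, a safety net is a dichotomy between the regime $\mathcal{A}(\Omega)$ small (handled by a Fuglede-type second-order expansion of $\lambda_{1,\beta}$ about $B$) and the regime $\mathcal{A}(\Omega)$ of order one (handled by compactness and continuity of $\lambda_{1,\beta}$ under $L^1$-convergence).
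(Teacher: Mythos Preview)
Your refinement of Bossel--Daners with the Fusco--Maggi--Pratelli inequality is exactly the paper's Step~2 (Lemma~\ref{step1}); what one actually gets from it, after the integration in $t$, is
\[
\lambda_{1,\beta}(\Omega)-\lambda_{1,\beta}(B)\ \geq\ \tfrac{\beta}{2}\,\bigl(\mathrm{ess\,inf}_{\Omega}u\bigr)^{2}\bigl(\Hm(\partial^*\Omega)-\Hm(\partial B)\bigr),
\]
because for $t<\mathrm{ess\,inf}_\Omega u$ one has $U_t=\Omega$, while for larger $t$ only the nonnegative isoperimetric deficit survives. So the whole asymmetry gain is multiplied by $(\mathrm{ess\,inf}_\Omega u)^2$. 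The genuine gap is your transfer lemma: Moser iteration yields an $L^\infty$ \emph{upper} bound on $u$, not the claim $|\Omega\setminus U_t|\le Ct$ with $C=C(N,\beta,|\Omega|)$. No such estimate holds uniformly---on a dumbbell (two equal balls joined by a thin neck) the first eigenfunction concentrates on one side and $|\{u\le t\}|$ stays of order $|\Omega|/2$ as $t\to 0$; more drastically, the paper observes that $\mathrm{ess\,inf}_\Omega u=0$ whenever $\Omega$ fails to have finite perimeter. Your safety nets do not close this: $\lambda_{1,\beta}$ is \emph{not} continuous under $L^1$-convergence of sets (so compactness fails), and a Fuglede expansion needs $C^1$-closeness to the ball, which small Fraenkel asymmetry alone does not provide without a selection principle.

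The paper's fix is precisely a selection principle, but targeted at the non-degeneracy of $u$ rather than at $C^1$-regularity of $\partial\Omega$. One solves the free discontinuity problem $\min\{\lambda_{1,\beta}(A)+k|A|:A\subset\Omega\text{ open}\}$ for a specific $k=k(N,\beta,|\Omega|)$; the optimal $A_{\mathrm{opt}}$ has a normalized eigenfunction with $\mathrm{ess\,inf}_{A_{\mathrm{opt}}}u\ge\alpha(N,\beta,|\Omega|)>0$ (Proposition~\ref{bfnt10}, via a Caffarelli--Kriventsov cut-off argument). Applying the refined Bossel--Daners estimate to $A_{\mathrm{opt}}$ now gives a genuine $\mathcal{A}(A_{\mathrm{opt}})^2$ gain with a universal constant, and the passage from $\mathcal{A}(A_{\mathrm{opt}})$ back to $\mathcal{A}(\Omega)$ comes not from level sets of $u$ but from the inclusion $A_{\mathrm{opt}}\subset\Omega$ together with the uniform convexity of $r\mapsto\lambda_{1,\beta}(B_r)+k|B_r|$ (Corollary~\ref{uniformconvexity}), which controls $|\Omega\setminus A_{\mathrm{opt}}|$ in terms of the eigenvalue deficit. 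This penalized selection is the missing idea in your outline.
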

The power $2$ on the Fraenkel asymmetry is sharp. The constant $c$ is explicit but, as we shall explain in the last section, is not optimal.

The resolution by  Fusco, Maggi and Pratelli \cite{FMP08}   of the conjecture of Hall was an important breakthrough on the quantitative isoperimetric inequality and opened the way for a class of quantitative inequalities of spectral type. Two new proofs of the quantitative isoperimetric inequality   followed in 2010 and 2012 (see \cite{FMP10} and \cite{CL12}). 

Quantitative estimates involving the lower eigenvalues of the Laplace operator were naturally investigated. For the first non zero eigenvalue of the Neumann Laplacian (see \cite{BP12}), or the Steklov problem (see \cite{BDPR12}), these estimates can be obtained quite directly, since the balls are maximizers and the quantitative estimate has to be carried on fixed test functions. 

The same question for the Dirichlet Laplacian turned out to be much more difficult. Contrary to the maximization problems, the nature of this one is similar to the classical isoperimetric inequality. We refer to \cite{FMP09} (and the references therein)  for a first general quantitative estimate. The problem was completely solved by Brasco, De Philippis and Velichkov in \cite{BDe}, by techniques involving the regularity of free boundary problems of Alt-Caffarelli type. Indeed, the inequality proved in \cite{BDe} for the first eigenvalue of the Dirichlet Laplacian (formally $\beta =+ \infty$) reads
$$|\Omega|^\frac 2N \lambda_{1,\infty}(\Omega)-|B|^\frac 2N \lambda_{1,\infty}(B)\ge c A^2 (\Omega),$$
where the constant $c$ depends only on the dimension of the space, and the inequality itself is scale invariant. 

The recent survey by Brasco and De Philippis \cite{BDP16} gives a complete and up to date overview of the topic. The quantitative estimate for the Robin eigenvalue is left as an open problem, and the purpose of our paper is to solve it.

 Contrary to the Dirichlet case,  scale invariance is not preserved in the Robin problem because of the presence of the parameter $\beta$. For this reason, the constant appearing on the right hand side in \eqref{bfnt01} depends on  the measure of the domain and the parameter $\beta$. In order to emphasize the scale dependence problem, we refer to the paper of \cite{Ke10}, where it is proved that the optimal set minimizing the third eigenvalue of the Robin Laplacian among sets of volume equal to 1 can not have the same shape (up to rigid transformations) for all values of $\beta$. We also point out that, contrary to the result for the Dirichlet problem in \cite{BDe}, our constant is  explicit. Meanwhile, our constants (depending on $\beta$, at fixed measure) are not optimal. Nevertheless, finding better constants in our approach for $\beta \rightarrow +\infty$ could lead to a new proof of the quantitative estimate in  \cite{BDe}.

Our proof is based on the use of the selection principle (see references \cite{CL12}, \cite{BDe}) which consists in dealing only with sets having a small Fraenkel asymmetry and to modify them into new sets which have a comparable Fraenkel asymmetry, lower eigenvalues, and enjoy some other {\it regularity} properties. Contrary to the results of \cite{CL12} and \cite{BDe} where by {\it regularity} property one understands {\it small distance in $C^1$-norm to the ball}, in our case the property we search involves the behavior of an eigenfunction near the boundary. 

In order to make the selection principle work, one has to solve a free discontinuity problem. The prototype of a free discontinuity problem is the Mumford-Shah functional. Nevertheless, in the recent years much progress has been done in the study of free discontinuity problems with Robin boundary conditions. We refer the reader to \cite{BG10,BG15,BG15AIHP,BL14,CK}. Solving such problems requires a quite involved background in the theory of special functions of bounded variation. Our intention is to keep technicalities at the lowest possible level and to make reference to the original papers for all technical results. 

\medskip

\noindent {\bf Strategy of the proof:}

\begin{itemize}
\medskip
\item {\bf Step 1.} Let  $\Omega\subset \R^N$ be  an open set of finite volume. For a suitable value $k >0$ (which depends only on $|\Omega|$ and $N$), we solve the free discontinuity problem
$$\min \{\lambda_{1, \beta} (A)+k|A| : A \subset \Omega, A \mbox{ open}\}.$$
We prove that a solution exists and that for a nonnegative, $L^2$-normalized eigenfunction $u$ on $A$, we have
$$ess \inf_{x\in A} u(x) \ge \alpha,$$
where the value $\alpha$ depends only on $N, \beta, k, \lambda_{1,\beta} (\Omega), |\Omega|$ (on the nonsmooth set $A$, the fundamental Robin eigenvalue $\lambda_{1, \beta} (A)$  is defined by \eqref{weak}, next section).

\medskip

\item {\bf Step 2.} Let $A$ be a connected,  open set with finite measure, and $B$ a ball of the same measure. We prove that for a nonnegative, $L^2$-normalized eigenfunction $u$ we have
$$\lambda_{1, \beta} (A)-\lambda_{1, \beta} (B)\ge   \frac \beta2 (ess \inf_{x\in A} u(x))^2\big  ({\mathcal H}^{N-1}(\partial^* A)-{\mathcal H}^{N-1}(\partial B)\big ).$$
Above, $\partial ^* A$ stands for the reduced boundary of $A$ and the convention $0\cdot \infty =0$ is used. In particular, this implies that $ess \inf_{x\in A} u(x)=0$ as soon as $A$ does not have finite perimeter. \medskip

\item {\bf Step 3.} We apply Step 2 to the set $A$ issued from Step 1. We use the quantitative isoperimetric inequality \cite{FMP08} for the set  $A$ and we prove that the Fraenkel asymmetry of $A$ and the loss of measure $|\Omega\setminus A|$ are comparable with the Fraenkel asymmetry of $\Omega$. Consequently we get inequality \eqref{bfnt01}.
\end{itemize}
\medskip

 Step 1 is the most technical part and is related to regularity results for free discontinuity problems of Robin type. We refer the reader to the recent paper \cite{BG15AIHP} which fixes the framework of such problems and to \cite{BG10,BL14, BG15} for several results that we use in this paper. In particular, the sharp estimate of the constant $\alpha$ is obtained using ideas from \cite{CK}. The proof of Step 2 is based on a refinement of the proof of Bossel and Daners. Step 3 is a direct consequence of the quantitative estimate for the perimeters, proved in \cite{FMP08}. The inclusion $A \subset \Omega$ and of the uniform convexity of the mapping 
$$(0,+\infty)\ni R\mapsto \lambda_{1, \beta} (B_R)+ k|B_R|,$$
 where $B_R$ stands for the ball of radius $R$, insure that the Fraenkel asymmetries of $A$ and $\Omega$ are comparable.

It is important to point out that the Faber-Krahn inequality for the Robin-Laplacian was extended in \cite{BG10} to arbitrary open sets and, even more, transformed in a Poincar\'e like inequality with trace term for special functions of bounded variation. The natural question 
\medskip

\centerline {\it What is the precise  definition of the Robin eigenvalue in an arbitrary open set ?}  
\medskip

\noindent will be answered in Section 2.
Defining the Robin eigenvalue problem in an arbitrary open set is not straightforward and requires some technicalities developed in the theory of special functions of bounded variation, as done in \cite{BG10,BG15}. One would be happy to have a quantitative estimate in the (smaller) class of Lipschitz sets and to avoid these technicalities. Nevertheless, our proof of the quantitative estimate is fundamentally requiring several regularity results for free discontinuity problems of Robin type, so that those technicalities are hardly avoidable. Readers who are not familiar with this theory, should imagine (from an intuitive point of view) that all sets are Lipschitz.

\section{The Robin eigenvalue problem}
Throughout the paper, $B$ denotes a ball with specified size. If ambiguity occurs, we denote by $B^{|A|}$  a ball with the same measure as the set $A$ and by $B_r$ a ball of radius $r$. 

\noindent {\bf The fundamental Robin eigenvalue in a Lipschitz set.} Let $\beta >0$ be fixed. Let $\Omega\subset \R^N$ be a bounded, open, Lipschitz, connected set. The fundamental eigenvalue of the Robin-Laplacian on $\Omega$ is defined by 
\begin{equation}
\label{classical}
\lambda_{1,\beta} (\Omega) = \mathop{\min_{v\in H^1(\Omega)}}_{ v\ne 0} \dfrac {\displaystyle \int_\Omega |\nabla v|^2 \, dx + \beta \displaystyle\int_{\partial \Omega} v^2 \, d \Hm }{\displaystyle\int_\Omega  v^2 \, dx}.
\end{equation}
The minimizer $u$ can be chosen to be nonnegative (see for instance \cite{D1}) and satisfies the equation \eqref{eigen_problem}  in the weak form
$$\forall v \in H^1(\Omega)\;\; \int_\Omega \nabla u \nabla v dx +\beta \int_{\partial \Omega} uv d \Hm = \lambda_{1,\beta} (\Omega) \int_\Omega uv dx. $$

\medskip

\noindent {\bf The Robin eigenvalue on balls.} We denote below by $B_r$ the ball of radius $r$.  We recall that on a ball, the eigenfunction associated to the first eigenvalue is radially symmetric. 

 The following result plays a crucial role in the estimate of the quantitative term. It was observed in  \cite[Section 5]{BG15} (see in particular  Remark 5.5), that the mapping 
$r \mapsto \lambda_{1,\beta} (B_r)$ is  convex. 
We give below a short proof of a stronger assertion. 

\begin{lemma}
\label{convexity}
The function $g:  r \in (0,+\infty) \rightarrow r\,\lambda_{1,\beta} (B_r)$ is strictly convex.
\end{lemma}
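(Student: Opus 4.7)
\medskip

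\noindent\textbf{Proof plan.} The natural starting point is a scaling reduction. For a Robin eigenfunction $u$ on $B_r$ with parameter $\beta$, the change of variable $v(y)=u(ry)$ turns $u$ into an eigenfunction on $B_1$ with Robin parameter $r\beta$ and eigenvalue $r^{2}\lambda_{1,\beta}(B_r)$. Setting $\mu(t):=\lambda_{1,t}(B_1)$, this yields the identity
$$g(r)=r\lambda_{1,\beta}(B_r)=\frac{\mu(r\beta)}{r}=\beta\,\frac{\mu(t)}{t},\qquad t=r\beta,$$
so that strict convexity of $g$ in $r$ is equivalent, via a linear change of variable, to strict convexity of $k(t):=\mu(t)/t$ in $t\in(0,\infty)$. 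This puts everything on the single reference ball $B_1$, which should simplify the analysis.

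\medskip

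From the Rayleigh characterization of $\mu$ we have
$$k(t)=\inf_{v\in H^1(B_1)\setminus\{0\}}\frac{(1/t)\int_{B_1}|\nabla v|^2+\int_{\partial B_1}v^2\,d\Hm}{\int_{B_1}v^2},$$
exhibited as an infimum of affine functions of $s=1/t$, so $k$ is concave in $1/t$ (equivalently $s\mu(1/s)$ is concave in $s$). This is the source of the weaker convexity statement of \cite{BG15}, but it does \emph{not} by itself imply that $k$ is convex in $t$. A direct differentiation shows
$$k''(t)=\frac{t^{2}\mu''(t)-2t\mu'(t)+2\mu(t)}{t^{3}},$$
so the task becomes the pointwise estimate $t^{2}\mu''(t)-2t\mu'(t)+2\mu(t)>0$ for every $t>0$.

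\medskip

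The Hadamard formula (differentiating the Rayleigh quotient at the $L^{2}$-normalized positive minimizer $\phi_{t}$) gives $\mu'(t)=\int_{\partial B_1}\phi_{t}^{2}\,d\Hm$, and testing the eigenvalue equation against $\phi_{t}$ yields $\mu(t)-t\mu'(t)=\int_{B_1}|\nabla\phi_{t}|^2>0$. The target inequality can thus be rewritten as
$$-t^{2}\mu''(t)<2\int_{B_1}|\nabla\phi_{t}|^{2}.$$
Since $\mu$ is concave, $-\mu''(t)\geq0$, and what is really needed is a sharp upper bound on $-\mu''(t)$.

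\medskip

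To produce such a bound I would differentiate the weak Robin formulation once more in $t$: the function $\partial_{t}\phi_{t}$ solves an inhomogeneous Robin problem with forcing $\mu'(t)\phi_{t}$ and boundary datum $-\phi_{t}$, orthogonal to $\phi_{t}$ in $L^{2}(B_1)$, yielding a closed expression for $\mu''(t)$. The radial symmetry of $\phi_{t}$ should then be exploited, either via the explicit representation $\phi_{t}(x)=c_{t}|x|^{1-N/2}J_{N/2-1}(\sqrt{\mu(t)}\,|x|)$ (with $t$ and $\mu$ linked by $t=\sqrt{\mu}\,J_{N/2}(\sqrt{\mu})/J_{N/2-1}(\sqrt{\mu})$ at $|x|=1$), or via a direct one-dimensional Sturm–Liouville computation on the ODE satisfied by $\phi_{t}$. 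The main obstacle is precisely this final step: concavity of $\mu$ alone is not strong enough, and one must use the rigidity of the radial eigenvalue problem on the ball to close the strict inequality. Strictness would then follow because $\int_{B_1}|\nabla\phi_{t}|^2>0$ for every $t>0$.
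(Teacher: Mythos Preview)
Your scaling reduction is correct and pleasant: writing $g(r)=\beta\,k(r\beta)$ with $k(t)=\mu(t)/t$ and $\mu(t)=\lambda_{1,t}(B_1)$ does translate the problem to the unit ball, and the Hadamard computations $\mu'(t)=\int_{\partial B_1}\phi_t^2$ and $\mu(t)-t\mu'(t)=\int_{B_1}|\nabla\phi_t|^2>0$ are right. But the proposal stops exactly at the decisive point: you reduce everything to the inequality
\[
t^{2}\mu''(t)-2t\mu'(t)+2\mu(t)>0,
\]
and then do not prove it. You yourself flag this as ``the main obstacle,'' and the two routes you sketch (differentiating the weak formulation to get a closed expression for $\mu''$, or exploiting the Bessel/Sturm--Liouville structure) are left unexecuted. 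As it stands this is a reformulation, not a proof: concavity of $\mu$ gives no control on $-t^{2}\mu''$ versus $2(\mu-t\mu')$, and the perturbation expression for $\mu''$ involves an infinite sum (or an inhomogeneous Robin problem) that you would still have to estimate sharply.

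The paper's argument bypasses this difficulty by working directly with $g(r)$ and the Bessel representation of the radial eigenfunction. From the implicit equation $\sqrt{\lambda}\,J_{N/2}(\sqrt{\lambda}\,r)=\beta\,J_{N/2-1}(\sqrt{\lambda}\,r)$ one obtains, after Bessel identities, the closed first-order ODE
\[
g'(r)=-\frac{g(r)}{r}\Bigl(1-\frac{2\beta}{g(r)-(N-2)\beta+\beta^{2}r}\Bigr),\qquad g(0)=N\beta,
\]
with $g'<0$ and $g<N\beta$. Differentiating once more gives $g''$ as a sum of three terms, each of which is manifestly positive by elementary sign considerations. In other words, the paper turns the Bessel structure into an autonomous-looking ODE for $g$ and checks $g''>0$ by inspection, whereas your route would require a sharp second-variation estimate that you have not supplied. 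If you do pursue the Bessel formula you wrote for $\phi_t$, you will essentially recover the paper's computation in the variable $t=r\beta$; the content lies in actually carrying it out.
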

\begin{proof}
The eigenvalue $\lambda_1(B_r)$ is implicitely defined by the equation $G(r, \lambda)=0$ where
\[
G(r, \lambda)= \sqrt{\lambda} J_{N/2}(\sqrt{\lambda} r) - \beta J_{N/2-1}(\sqrt{\lambda} r)
\]
Properties of Bessel functions see \cite{AS} give
\[
\frac{d \lambda}{dr}= - \frac{G_r}{G_{\lambda}} = -\frac{2 \lambda}{r} \left( \frac{\lambda r -(N-1)\beta +\beta^2 r}{\lambda r -(N-2)\beta +\beta^2 r}\right).
\]

and then
\begin{equation}
\label{deriv}
\left \{
\begin{array}{ll}
g'(r) =- \displaystyle\frac{g(r)}{r} \left(1 - \displaystyle\frac {2 \beta}{g(r)-(N-2)\beta +\beta^2 r}\right)\\
g(0)= N \beta.
\end{array}
\right.
\end{equation}
Moreover, by scaling,  we have that $g'(r)<0$ and then $g(r) <N \beta$. 
Denoted by $h(r) = g(r)-(N-2)\beta +\beta^2 r$ and differentiating \eqref{deriv}  
\[
g''(r)= \frac{2g(r)}{r^2}\left(1- \frac{2\beta}{h(r)}\right) + \frac{2\beta g(r)}{r^2 h^2(r)}(-h(r)+2\beta -\beta^2r)- \frac{2\beta g(r)g'(r)}{h^2(r)r} >0
\]
since :
\begin{itemize}
\item $\frac{2g(r)}{r^2}\left(1- \frac{2\beta}{h(r)}\right)= -\frac{2g'(r)}{r} > 0$;
\smallskip
\item $(-h(r)+2\beta -\beta^2x)= N\beta - g(r) >0$;
\smallskip
\item $- \frac{2\beta g(r)g'(r)}{h^2(r)r}>0$
\end{itemize}
the strict convexity of $g(r)$ follows at once.
\end{proof}
An important consequence is the following.
\begin{corollary}
\label{uniformconvexity}
For every $k>0$, the function $  r \in (0,+\infty) \rightarrow \lambda_{1,\beta}(B_r) + k|B_r|$ is uniformly convex.
\end{corollary}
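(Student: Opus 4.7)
The plan is to derive the uniform convexity directly from Lemma \ref{convexity} by computing the second derivative of
\[
f(r) := \lambda_{1,\beta}(B_r) + k|B_r| = \frac{g(r)}{r} + k|B_1|\, r^N
\]
and verifying that it admits a positive lower bound on $(0, +\infty)$. A routine differentiation yields
\[
f''(r) = \frac{g''(r)}{r} - \frac{2\, g'(r)}{r^2} + \frac{2\, g(r)}{r^3} + k|B_1|\, N(N-1)\, r^{N-2}.
\]

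The next observation is that every term on the right is nonnegative, and the first three are strictly positive: indeed $g''(r) > 0$ by Lemma \ref{convexity}; $g'(r) < 0$ by the scaling observation made in its proof; and $g(r) > 0$ as the product of a positive radius and a positive Robin eigenvalue. So $f''(r) > 0$ for every $r > 0$, giving strict convexity for free.

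To upgrade to uniform convexity, I would split $(0, +\infty)$ into three regimes and extract a positive lower bound on each. As $r \to 0^+$, the initial condition $g(0) = N\beta > 0$ from \eqref{deriv} forces $2g(r)/r^3 \to +\infty$, so $f''(r) \to +\infty$. As $r \to +\infty$, for $N \geq 2$ the last term $k|B_1| N(N-1) r^{N-2}$ is bounded below by the positive constant $2 k |B_1|$ (for $N = 2$) or diverges (for $N \geq 3$); meanwhile the first three terms remain nonnegative. On any intermediate compact subinterval $[a,b] \subset (0, +\infty)$, continuity and strict positivity of $f''$ supply a positive minimum. Gluing the three regimes together gives $\inf_{r > 0} f''(r) > 0$, which is exactly uniform convexity.

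I do not expect any real obstacle: the hard content is carried by Lemma \ref{convexity}, while the added term $k|B_r|$, being itself convex of degree $N$ in $r$, provides the coercivity needed at infinity. The only subtle point is that the polynomial contribution $r^{N-2}$ degenerates as $r \to 0^+$ when $N > 2$, but in that regime the behavior is saved by the explicit initial value $g(0) = N\beta$ recalled in \eqref{deriv}, which forces $2g(r)/r^3$ to blow up.
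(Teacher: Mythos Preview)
Your proposal is correct and is essentially the same approach the paper has in mind: the paper's own proof consists of the single sentence ``The proof is an immediate consequence of Lemma \ref{convexity}'', and your second-derivative computation together with the three-regime analysis is exactly the routine verification that makes that sentence honest. The ingredients you invoke --- $g''>0$, $g'<0$, and $g(0)=N\beta$ --- are all stated in Lemma \ref{convexity} or its proof, so nothing external is being smuggled in.
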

\begin{proof}
The proof is an immediate consequence of Lemma \ref{convexity}.
\end{proof}
A second consequence concerns the penalized version of the Faber-Krahn inequality (see   \cite[Section 5]{BG15} for a discussion of this topic).
\begin{corollary}
\label{bfnt08}
Let $m>0$ be given and $B$ a ball of measure $m$. There exists a unique $k >0$ such that for every open set with finite measure $A\subset \R^N$ we have
\[
\lambda_{1,\beta}(A) + k |A| \ge \lambda_{1,\beta}(B) + k |B|.
\] 
\end{corollary}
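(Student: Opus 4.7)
The plan is to reduce the inequality to a one-variable comparison among balls by invoking the Faber-Krahn inequality \eqref{FK}, and then to use the strict convexity provided by Corollary \ref{uniformconvexity} to pin down the unique admissible value of $k$. Let $r_0 = (m/\omega_N)^{1/N}$, so that $B = B_{r_0}$, and for $k > 0$ set $\varphi_k(r) := \lambda_{1,\beta}(B_r) + k\omega_N r^N$. By \eqref{FK}, every open set $A$ of finite measure satisfies $\lambda_{1,\beta}(A) + k|A| \ge \varphi_k(r(A))$ with $r(A) = (|A|/\omega_N)^{1/N}$. Hence the desired inequality holds for a given $k$ if and only if $r_0$ is a global minimizer of $\varphi_k$ on $(0,+\infty)$.

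For existence, I would use Corollary \ref{uniformconvexity}, which asserts that $\varphi_k$ is strictly (indeed uniformly) convex. Since $\varphi_k(r)\to+\infty$ both as $r\to 0^+$ (because $\phi(r):=\lambda_{1,\beta}(B_r)\sim N\beta/r$ from $g(0)=N\beta$ in Lemma \ref{convexity}) and as $r\to+\infty$ (because of the penalty term), the unique minimizer of $\varphi_k$ is characterized as its unique critical point. The stationarity condition $\varphi_k'(r_0)=0$ reads $k=k^\ast:=-\phi'(r_0)/(N\omega_N r_0^{N-1})$. It remains to verify $k^\ast>0$, i.e.\ $\phi'(r_0)<0$. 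This is extracted from Lemma \ref{convexity}: since $g(r)=r\phi(r)$ satisfies $g'(r)<0$, one has $\phi'(r)=(g'(r)-\phi(r))/r<0$ as $\phi>0$. Choosing $k=k^\ast$ therefore yields existence.

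For uniqueness, if some other $k>0$ also makes $B$ a minimizer of $A\mapsto \lambda_{1,\beta}(A)+k|A|$, then specializing to the family of balls shows that $\varphi_k$ attains its minimum at $r_0$; the first-order condition $\varphi_k'(r_0)=0$ then forces $k=k^\ast$. The main (minor) obstacle is really contained in the preparatory lemma: one needs the sign $g'(r)<0$ to guarantee $k^\ast>0$, and one needs the strict convexity from Corollary \ref{uniformconvexity} to identify the critical point with the minimizer. Once these are in hand, the argument reduces to a standard exercise on strictly convex functions of one real variable.
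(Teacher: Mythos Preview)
Your proposal is correct and follows precisely the approach the paper leaves implicit: the paper does not give an explicit proof of Corollary~\ref{bfnt08} but simply presents it as a consequence of Lemma~\ref{convexity}/Corollary~\ref{uniformconvexity} together with the Faber--Krahn inequality (referring to \cite[Section~5]{BG15}). Your reduction to balls via \eqref{FK}, identification of the unique $k^\ast$ through the first-order condition for the strictly convex function $\varphi_k$, and the verification $k^\ast>0$ from $g'(r)<0$ are exactly the details that flesh out this outline.
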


\noindent{\bf The fundamental Robin eigenvalue in nonsmooth open sets.}
Let $A \subset \R^N$ be an open set of finite measure. Since the boundary of $A$ is not assumed to be smooth,  the trace of a Sobolev function from $H^1(A)$ is not well defined on $\partial A$.  In view of the Rayleigh quotient formulation \eqref{classical}, the Robin eigenvalue problem on $A$ can not be defined.

We follow the framework introduced in \cite{BG10}, which gives a natural definition for the fundamental eigenvalue in every open set. This definition  extends the classical one on Lipschitz sets by replacing Sobolev functions in the Rayleigh quotient, by functions of bounded variation.

\medskip

Below, $SBV(\R^N)$ stands for the space of special functions of bounded variation on $\R^N$. For a function $u\in SBV(\R^N)$,  $\nabla u$ denotes  the approximate gradient, $u^+, u^-$ denote the approximate upper and lower limits, and $J_u=\{x \in \R^N: u^+(x)>u^-(x)\}$ is the jump set. We refer the reader to \cite{AFP00} for a detailed analysis of the  $SBV$ spaces. 

In \cite{BG10} it is introduced the space $SBV^\frac 12(\R^N)$ of nonnegative measurable functions $u$ such that $u^2\in SBV(\R^N)$. As $u^2 \in SBV(\R^N)$, one defines 
\begin{itemize}
\item the set $J_u$ is defined to be the set $J_{u^2}$
\item the jump terms $u^\pm$ are defined as $\sqrt {(u^2)^\pm}$ 
\item the gradient of $u$ is defined by $\nabla u= \frac{\nabla u^2}{2 u} 1_{\{u>0\}}$. 
\end{itemize}

\begin{definition}\label{bfnt07}
Let $A \subset \R^N$ be an open set of finite measure. 
The fundamental Robin eigenvalue of the set $A$ is by definition
\begin{equation}
\label{weak}
\lambda_{1,\beta} (A) =    \min \left \{ \begin{array}{ll}\displaystyle& \frac{  \displaystyle\int_{\rn} |\nabla u|^2 \, dx + \beta  \displaystyle\int_{J_u}( |u^+|^2+|u^-|^2 )\, d\h}{ \displaystyle\int_{\rn} u^2 \, dx}: u \in SBV^{  \frac 12} (\rn);  \\
\cr
& u=0 \> \hbox{a.e. in} \> \rn \setminus A; J_u \subset \partial A  \>\hbox{a.e.}  \mathcal{H}^{N-1}. \end{array} \right \}
\end{equation}
\end{definition}
\medskip
Here are some important facts about this definition. 
\begin{itemize}
\item The minimum in \eqref{weak} is attained: see \cite[Proposition 1, Section 3]{BG10}.
\item All minimizers are called eigenfunctions and they satisfy
$$\forall a.e. \; t>0, \; \forall v\in SBV^\frac 12(\R^N)\cap L^\infty(\R^N), J_v \subset J_u, v=0\; a.e. \;on\; \{u \le t\}$$

$$\mbox{such that } \int_{J_u \setminus J_v} |v|^2 d \Hm <+\infty, $$
$$ \int_{\R^N} \nabla u \nabla v  \, dx+ \beta \int_{J_u} u^+\gamma^+(v)+u^-\gamma^-(v)  \, d\Hm = \lambda_{1,\beta} (A) \int_{\R^N} uv \, dx,$$
where the approximate limits $v^\pm(x)$ are relabeled by  $\gamma^\pm(v)$ in such a way to have the same orientation as the approximate limits of $u$ at jump points. We refer to \cite[Proposition 2, Section 6]{BG10} and \cite[Theorem 6.11]{BG15} for this assertion.
\item If $A$ is disconnected, there exist eigenfunctions vanishing on all, but one, connected components.
\end{itemize}

If $A$ is bounded and Lipschitz, the Rayleigh quotient introduced in Definition \ref{bfnt07} and the classical one \eqref{classical} produce the same eigenvalues  and eigenfunctions.

We give the following result.
\begin{lemma}\label{bfnt05}
Let $A \subset \R^N$ be an open set of finite measure. There exists a constant $C$ depending only on $N, \beta, \lambda_{1,\beta}(A)$ and $|A|$  such that for every   minimizer $u$ in \eqref{weak} the following inequality holds
$$\|u\|_{L^\infty (\R^N)} \le C(N, \beta, \lambda_{1,\beta}(A), |A|) \|u\|_{L^2(\R^N)}.$$
\end{lemma}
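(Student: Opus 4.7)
The plan is to run a Stampacchia/Moser truncation argument based on the weak Euler–Lagrange identity satisfied by any minimizer $u$ of \eqref{weak}, stated in the third bullet after Definition~\ref{bfnt07}. Normalize so that $\|u\|_{L^2(\R^N)} = 1$ and take $u \ge 0$. For each $t > 0$ and integer $q \ge 1$, I want to test with $v = ((u-t)_+)^{2q-1}$; since this may fail to lie in $L^\infty$, I first replace it by $v_M = \min(((u-t)_+)^{2q-1}, M)$ and verify the admissibility conditions, namely $v_M \in SBV^{1/2}(\R^N)\cap L^\infty$, $J_{v_M}\subset J_u$, $v_M = 0$ on $\{u\le t\}$, and $\int_{J_u\setminus J_{v_M}}|v_M|^2 d\mathcal{H}^{N-1} < \infty$. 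The chain rule applied through the $SBV^{1/2}$-calculus yields $\nabla v_M = (2q-1)(u-t)_+^{2q-2}\nabla u$ on $\{v_M < M\}$.

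Letting $M \to \infty$ by monotone convergence and using the pointwise bound $u^\pm\gamma^\pm(v) \ge ((u^\pm - t)_+)^{2q}$ on $J_u$, the weak identity yields, with $w := (u-t)_+^q$,
\begin{equation*}
\tfrac{2q-1}{q^2}\int_{\R^N}|\nabla w|^2 dx + \beta \int_{J_w}\bigl(|w^+|^2 + |w^-|^2\bigr) d\mathcal{H}^{N-1} \le \lambda_{1,\beta}(A)\int_{\R^N} u(u-t)_+^{2q-1} dx.
\end{equation*}
Splitting $\{u>t\} = \{t<u\le 2t\}\cup\{u>2t\}$ and using $u \le 2(u-t)$ on the second piece, the right side is bounded by $2\lambda_{1,\beta}(A)\bigl(\int w^2 dx + t^{2q}|A_t|\bigr)$ with $A_t := \{u > t\}$.

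Next, $w$ vanishes outside $A_t$ and has $J_w \subset \partial A_t$ up to $\mathcal{H}^{N-1}$-null sets, so it is admissible in \eqref{weak} on $A_t$; the Faber–Krahn inequality \eqref{FK} yields $\int|\nabla w|^2 + \beta\int_{J_w}(|w^+|^2+|w^-|^2)d\mathcal{H}^{N-1} \ge \lambda_{1,\beta}(B^{|A_t|})\int w^2 dx$. Multiplying by $(2q-1)/q^2 \le 1$ and combining with the previous bound produces, as long as $\tfrac{2q-1}{q^2}\lambda_{1,\beta}(B^{|A_t|}) \ge 4\lambda_{1,\beta}(A)$,
\begin{equation*}
\int_{\R^N}(u-t)_+^{2q}dx \le \frac{4 q^2\, \lambda_{1,\beta}(A)}{(2q-1)\,\lambda_{1,\beta}(B^{|A_t|})}\, t^{2q}\,|A_t|.
\end{equation*}
Chebyshev gives $|A_t| \le 1/t^2$, and since $\lambda_{1,\beta}(B_r)\to \infty$ as $r\to 0$ (consequence of Lemma~\ref{convexity} and the initial value $g(0)=N\beta$), the threshold condition is met for $t$ larger than some computable $T_0 = T_0(N,\beta,\lambda_{1,\beta}(A),|A|)$.

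A final iteration closes the argument. One may take $q = 1$ and a geometric sequence $t_k = T_0 + (T-T_0)(1-2^{-k})$, exploit $\int(u-t_k)_+^2 dx \ge (t_{k+1}-t_k)^2|A_{t_{k+1}}|$, and derive a De~Giorgi recursion that drives $|A_{t_k}| \to 0$ super-exponentially; sending $T\to\infty$ then gives $\|u\|_{L^\infty} \le C(N,\beta,\lambda_{1,\beta}(A),|A|)\|u\|_{L^2}$. Alternatively a Moser scheme with $q_k=\kappa^k$ delivers the same bound directly. The main obstacle is not the iteration but the admissibility check: verifying that the truncations $((u-t)_+^{2q-1})\wedge M$ lie in $SBV^{1/2}(\R^N)$ with the expected jump part, and that the chain-rule expressions are correct under the $SBV^{1/2}$-calculus defined via $u^2$; once this is secured, the remaining estimates are routine.
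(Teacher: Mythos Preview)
The paper does not actually prove this lemma: it simply refers to \cite[Theorem~6.11]{BG15}. Your proposal is therefore not to be compared with a proof in the paper but stands on its own. It is a standard De~Giorgi/Moser truncation scheme adapted to the $SBV^{1/2}$ framework, and in its De~Giorgi version ($q=1$) it is essentially correct.

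A few remarks on the details. The admissibility check you flag does go through: $v_M^2$ is a Lipschitz function of $u^2$ on $\{u^2>t^2\}$ (with Lipschitz constant depending on $t,q,M$), so $v_M\in SBV^{1/2}(\R^N)$; and $\int_{J_u\setminus J_{v_M}}|v_M|^2\,d\Hm<\infty$ because on that set either both traces $u^\pm\le t$ (so $v_M=0$) or both traces are large (and that portion of $J_u$ has finite $\Hm$-measure since the Rayleigh quotient is finite). The use of Faber--Krahn for $w=(u-t)_+$ supported on $A_t$ is legitimate via the Poincar\'e-type SBV inequality of \cite{BG10}, which does not require $A_t$ to be open. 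With $q=1$ one has $\lambda_{1,\beta}(B^{|A_t|})\sim N\beta\,|A_t|^{-1/N}$ as $|A_t|\to0$ (from $g(0)=N\beta$), which supplies the superlinear gain needed in the recursion; the correct conclusion is then to fix, say, $T=2T_0$ with $T_0$ large enough and deduce $|A_T|=0$, rather than ``sending $T\to\infty$''.

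The Moser alternative with $q_k=\kappa^k$ is more delicate here than you suggest: the factor $(2q-1)/q^2\to0$ degrades the threshold condition, and for $q>1$ the function $w=(u-t)_+^q$ need not lie in $SBV^{1/2}(\R^N)$ a priori (the map $s\mapsto(\sqrt{s}-t)_+^{2q}$ is not globally Lipschitz), so applying Faber--Krahn to it would require an additional truncation and limit. Sticking with the De~Giorgi iteration avoids both issues and gives the result cleanly.
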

The proof of this lemma is given in \cite[Theorem 6.11]{BG15}.

\medskip

\noindent {\bf Analysis of the level sets.}
The proof of the Faber-Krahn inequality by Bossel and Daners is based on the following results involving the behavior of the level sets of eigenfunctions. Below, we recall from   \cite{BG10} those results in the context of nonsmooth sets.

For $t \in (0,||u||_{L^{\infty}}) $ we set:
\[
U_t =\{x \in A: \  u(x) >t\}.
\]
For a.e. $t>0$, the set $U_t$ has finite perimeter. For every nonnegative  Borel function and for the sets $U_t$ having finite perimeter we define:

\[
H_A(U_t, \varphi) =  \frac{1}{|U_t|} \left( \int_{\partial ^* U_t\setminus J_u} \varphi \,d \Hm - \int_{U_t} \varphi^2 \, dx\right) + \]
\[+  \frac{ \beta}{|U_t|} \left(\Hm(J_u  \cap \{ u^+ >t\}) +\Hm(J_u \cap \{ u^- >t\})\right)   .
\]

The following proposition hold true.
\begin{proposition}
\label{bfnt09}
Let $u$ be a minimizer of \eqref{weak}. Then
\[
\lambda_{1, \beta}(A)= H_A\left(U_t, \frac{|\nabla u|}{u}\right)
\]
for almost all $t \in (0,||u||_{L^\infty})$.
\end{proposition}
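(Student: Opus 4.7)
The identity is the Bossel--Daners level-set identity, here in the $SBV^{\frac 12}$ framework. In the classical smooth case it follows by multiplying $-\Delta u = \lambda_{1,\beta}(A)\,u$ by $1_{U_t}/u$, integrating over $U_t$, and invoking the Robin condition $\partial_\nu u = -\beta u$ on $\partial A$. My plan is to reproduce this in the nonsmooth setting by applying the weak eigenfunction equation from the list following Definition~\ref{bfnt07} to a carefully chosen test function, and then recover the per-level identity by differentiating in $t$ and using coarea.

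First I would fix $t \in (0,\|u\|_{L^\infty})$ and take as test function
\[
v_t := \Bigl(1 - \tfrac{t}{u}\Bigr)_+,
\]
with the convention $v_t = 0$ on $\{u = 0\}$. Since $\psi(s):=(1 - t/s)_+$ is a bounded Lipschitz function on $[0,\infty)$ vanishing on $[0,t]$, the composition $v_t = \psi(u)$ lies in $SBV^{\frac 12}(\R^N)\cap L^\infty(\R^N)$ with $J_{v_t}\subset J_u$, $v_t = 0$ a.e.\ on $\{u\le t\}$, and $v_t \equiv 0$ on $J_u\setminus J_{v_t}$, so $v_t$ is admissible. One has $\nabla v_t = (t/u^2)\nabla u\cdot 1_{U_t}$, $u v_t=(u-t)_+$, and $\gamma^\pm(v_t)=(1-t/u^\pm)_+$ with the orientation of $u^\pm$ (since $\psi$ is nondecreasing), hence $u^\pm\gamma^\pm(v_t)=(u^\pm-t)_+$. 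Inserting $v_t$ in the weak equation yields the $t$-dependent identity
\[
t\!\int_{U_t}\!\frac{|\nabla u|^2}{u^2}\,dx + \beta\!\!\int_{J_u\cap\{u^+>t\}}\!(u^+-t)\,d\Hm + \beta\!\!\int_{J_u\cap\{u^->t\}}\!(u^--t)\,d\Hm = \lambda_{1,\beta}(A)\!\int_{U_t}(u-t)\,dx.
\]

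Next I would differentiate this identity in $t$, for a.e.~$t$ at which $U_t$ has finite perimeter and $\Hm(J_u\cap\{u^\pm=t\})=0$. The layer-cake formula gives $\int_{U_t}(u-t)\,dx=\int_t^\infty|U_s|\,ds$ and $\int_{J_u\cap\{u^\pm>t\}}(u^\pm-t)\,d\Hm=\int_t^\infty\Hm(J_u\cap\{u^\pm>s\})\,ds$, with $t$-derivatives $-|U_t|$ and $-\Hm(J_u\cap\{u^\pm>t\})$. The $SBV$ coarea formula, applied to the absolutely continuous part of $Du$, yields
\[
\int_{U_t}\frac{|\nabla u|^2}{u^2}\,dx = \int_t^\infty\frac{1}{s^2}\int_{\partial^* U_s\setminus J_u}|\nabla u|\,d\Hm\,ds,
\]
so that $\tfrac{d}{dt}\bigl[t\!\int_{U_t}|\nabla u|^2/u^2\,dx\bigr] = \int_{U_t}|\nabla u|^2/u^2\,dx - \tfrac{1}{t}\int_{\partial^* U_t\setminus J_u}|\nabla u|\,d\Hm$. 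Differentiating the identity above and using that $u\equiv t$ on $\partial^* U_t\setminus J_u$ (so $|\nabla u|/t=|\nabla u|/u$ there), after rearrangement I obtain
\[
\lambda_{1,\beta}(A)|U_t| = \int_{\partial^* U_t\setminus J_u}\!\frac{|\nabla u|}{u}\,d\Hm - \int_{U_t}\frac{|\nabla u|^2}{u^2}\,dx + \beta\bigl[\Hm(J_u\cap\{u^+>t\})+\Hm(J_u\cap\{u^->t\})\bigr],
\]
which after dividing by $|U_t|$ is the claimed identity $\lambda_{1,\beta}(A) = H_A(U_t,|\nabla u|/u)$.

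The main obstacle will be the admissibility of $v_t$ in the $SBV^{\frac 12}$ weak formulation and the precise identification of $\gamma^\pm(v_t)$ with $(1-t/u^\pm)_+$ in the correct orientation. If a direct verification is awkward, I will regularize to $v_t^\varepsilon := \bigl(1 - t/(u+\varepsilon)\bigr)_+$, which is a manifestly bounded Lipschitz function of $u$, apply the weak equation to $v_t^\varepsilon$, and pass to the limit $\varepsilon\to 0$ by dominated convergence using $|v_t^\varepsilon| \le 1$ and $|\nabla v_t^\varepsilon| \le |\nabla u|/t$ on $U_t$. Once the first $t$-identity is in place, the subsequent differentiation step is a routine coarea plus layer-cake computation valid for a.e.\ $t$ in the range of $u$.
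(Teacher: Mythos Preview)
The paper does not give a proof of this proposition; it simply refers to the original Bossel--Daners works in the smooth case, to \cite[Proposition~2.3]{BD} for the Lipschitz case, and to \cite[Lemma~6, Section~6]{BG10} for the $SBV$ setting. Your proposal, by contrast, supplies a self-contained argument.

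Your approach is correct and is precisely the natural $SBV$ substitute for the smooth ``divide by $u$ and integrate by parts over $U_t$'' computation. In the smooth setting one writes $-\Delta u/u=-\diver(\nabla u/u)-|\nabla u|^2/u^2$, integrates over $U_t$, and uses the divergence theorem together with the Robin condition; here the divergence theorem on $U_t$ is unavailable, so you replace it by testing the weak equation against $v_t=(1-t/u)_+$ to obtain an \emph{integrated-in-$t$} identity, and then recover the per-level identity by differentiating via layer-cake and coarea. Each step checks out: $v_t$ is admissible in the weak formulation following Definition~\ref{bfnt07} (it is a bounded nondecreasing Lipschitz function of $u$ vanishing on $\{u\le t\}$, with $J_{v_t}\subset J_u$ and $v_t\equiv 0$ on $J_u\setminus J_{v_t}$); the trace identifications $u^\pm\gamma^\pm(v_t)=(u^\pm-t)_+$ and the gradient computation are correct; and the layer-cake/coarea differentiation is valid for a.e.~$t$ since all the $t$-integrals involved are absolutely continuous. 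This is essentially the strategy one finds in the cited references, so your argument is both sound and in line with the literature the paper invokes.
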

We refer to the orginal results of Bossel and Daners for this proposition in the context of smooth sets, to  Proposition 2.3 in \cite{BD} for the Lipschitz version. Proposition  \ref{bfnt09}  has been proved in the SBV context in  Lemma 6, Section 6 of \cite{BG10}.

\begin{proposition}
Let $\varphi$  be a nonnegative function such that $\varphi \in L^2(U_t)$ for every $t>0$ and let $u$ be a minimizer of \eqref{weak}. Set $w= \varphi - \displaystyle\frac{|\nabla u|}{u}$ and $F(t) = \displaystyle\int_{U_t} w \frac{|\nabla u|}{u} \, dx$, then $F$ is locally absolutely continuous and 
\begin{equation}
\label{ineq_eigen}
\lambda_{1, \beta}(A) \ge H_A\left(U_t, \varphi\right) + \frac{1}{t |U_t|} \frac{d}{dt}(t^2 F(t))
\end{equation}
for almost all $t \in (0,||u||_{L^\infty})$.
\end{proposition}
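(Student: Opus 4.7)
The plan is to reduce the desired inequality to the manifest positivity of $\int_{U_t} w^2\,dx$. By Proposition \ref{bfnt09}, one has $\lambda_{1,\beta}(A) = H_A(U_t, |\nabla u|/u)$ for almost every $t$. Moreover, the $\beta$-jump contribution
$\frac{\beta}{|U_t|}\bigl(\Hm(J_u\cap\{u^+>t\}) + \Hm(J_u\cap\{u^->t\})\bigr)$
in the definition of $H_A(U_t,\cdot)$ does not involve the test function, so it cancels in the difference $H_A(U_t,|\nabla u|/u)-H_A(U_t,\varphi)$. It is therefore enough to prove
\[
H_A(U_t,|\nabla u|/u) - H_A(U_t,\varphi) \ge \frac{1}{t|U_t|}\frac{d}{dt}(t^2 F(t)).
\]

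Using $\varphi = w + |\nabla u|/u$ and expanding $\varphi^2 - (|\nabla u|/u)^2 = 2w\,|\nabla u|/u + w^2$, a direct substitution into the definition of $H_A$, together with the identity $F(t)=\int_{U_t} w\,|\nabla u|/u\,dx$, yields
\[
H_A(U_t,|\nabla u|/u) - H_A(U_t,\varphi) = \frac{1}{|U_t|}\left[-\int_{\partial^* U_t\setminus J_u} w\, d\Hm + 2F(t) + \int_{U_t} w^2\, dx\right].
\]

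For the time derivative of $F$, I would use the coarea formula for the approximate gradient of an $SBV$ function, which on the smooth part of the level sets gives the representation
\[
F(t) = \int_t^{\|u\|_\infty}\frac{1}{s}\int_{\partial^* U_s\setminus J_u} w\, d\Hm\, ds.
\]
Local integrability in $s$ follows from $\varphi\in L^2(U_s)$, $\nabla u\in L^2$, and $u\ge s$ on $U_s$ via Cauchy--Schwarz (Lemma \ref{bfnt05} controls the upper endpoint). This representation delivers the local absolute continuity of $F$ and the identity $tF'(t) = -\int_{\partial^* U_t\setminus J_u} w\, d\Hm$ for a.e.\ $t$. Hence
\[
\frac{1}{t|U_t|}\frac{d}{dt}(t^2 F(t)) = \frac{2F(t)+tF'(t)}{|U_t|} = \frac{1}{|U_t|}\left[2F(t) - \int_{\partial^* U_t\setminus J_u} w\, d\Hm\right].
\]
Subtracting this from the previous display, the boundary term and the $2F(t)$ term cancel exactly, and the proof reduces to the trivially true inequality $\int_{U_t} w^2\,dx \ge 0$.

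The delicate point I expect is the rigorous justification of the coarea-type formula for $F(t)$ in the $SBV^{1/2}$ setting: one must integrate only on the smooth part $\partial^* U_s\setminus J_u$ when the measure is generated by the approximate gradient $\nabla u$, while the jump contributions from $J_u\cap \partial^* U_s$ have already been absorbed into the $\beta$-term of $H_A$ that cancelled at the start. The precise level-set analysis underlying this step is developed in \cite{BG10,BG15}, on which I would rely rather than reproving it.
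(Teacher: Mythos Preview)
Your argument is correct and is exactly the standard Bossel--Daners computation: expand $H_A(U_t,|\nabla u|/u)-H_A(U_t,\varphi)$ using $\varphi=w+|\nabla u|/u$, rewrite $F$ via coarea to obtain $tF'(t)=-\int_{\partial^* U_t\setminus J_u} w\,d\Hm$, and observe that the discrepancy is $\frac{1}{|U_t|}\int_{U_t}w^2\,dx\ge 0$. The paper does not actually prove this proposition; it simply cites Proposition~3.1 in \cite{BD} for the Lipschitz case and Lemma~7, Section~6 in \cite{BG10} for the $SBV$ case, and your sketch is precisely the argument carried out in those references (including the reliance on the $SBV$ coarea/level-set analysis of \cite{BG10} that you flag as the delicate point).
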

We refer to the orginal results of Bossel and Daners for this proposition in the context of smooth sets, to  Proposition 3.1 in \cite{BD} for the Lipschitz version and to  Lemma 7, Section 6 in \cite{BG10} for the general SBV-version. 
\medskip

\section{Proof of Theorem \ref{main}}
In this section we prove the main result of the paper, Theorem \ref{main}.
We follow the structure described in the introduction. 

\medskip
\noindent{\bf Step 1.} Let $\Omega $ be an open set of finite measure.  Then, for a fixed $k>0$, we consider the following auxiliary free discontinuity problem:
\begin{equation}
\label{auxiliary}
\min \{\lambda_{1, \beta}(A) + k |A|: A \subset \Omega, A  \mbox{ open}\}
\end{equation}
The purpose is to prove the existence of a solution to this problem and to prove that a nonnegative, $L^2$ normalized eigenfunction satisfies a uniform lower bound
$$\alpha ||u||_{L^2(\rn)} \le essinf_{x\in A} u(x)$$
with a constant $\alpha$ which is well controlled. 

In order to prove existence of a solution for  \eqref{auxiliary}, one has first to consider a relaxed problem in $SBV^\frac{1}{2}$ and, in a second step, to prove some regularity of the solution. 

We introduce the following relaxed problem
\begin{equation}
\label{bfnt04}
   \min \left \{ \begin{array}{ll}\displaystyle& \displaystyle \int_{\rn} |\nabla u|^2 \, dx + \beta  \displaystyle\int_{J_u}( |u^+|^2+|u^-|^2 )\, d\h +k |\{u>0\}|: u \in SBV^{  \frac 12} (\rn);  \\
\cr
& u=0 \> \hbox{a.e. in} \> \rn \setminus \Omega, \displaystyle  \int_{\rn} u^2 \, dx=1. \end{array} \right \}
\end{equation}
which, a priori, may lead to a lower value than the minimum in \eqref{auxiliary}. Indeed, a test function in \eqref{bfnt04} may not necessarily be a test function for the eigenvalue of some open set $A$ in  \eqref{auxiliary}. The solution of this problem is a function in $SBV^{  \frac 12} (\rn)$. In order to prove that it corresponds to an eigenfunction on an open set, the key points are to prove a non-degeneracy result and the (local) Ahlfors regularity of the jump set. 

We begin with the following.
\begin{proposition}
\label{bfnt10}
Problem \eqref{bfnt04} has a solution $u$. Moreover, there exists a constant $\alpha$  depending only on $N, \beta, \lambda_{1,\beta} (\Omega), |\Omega|,k$  such that
\begin{equation}
\label{uniformbd}
\alpha ||u||_{L^2(\rn)} \le essinf_{x\in \{u>0\}} u(x).
\end{equation}
\end{proposition}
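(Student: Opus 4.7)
The plan is to obtain the existence part by the direct method in $SBV^{1/2}(\rn)$ and then to derive the pointwise lower bound via a truncation test applied to the minimizer.

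\emph{Existence.} I would take a minimizing sequence $(u_n)$, normalized by $\|u_n\|_{L^2(\rn)}=1$ and supported in $\Omega$. Using the classical Robin eigenfunction on a Lipschitz subdomain exhausting $\Omega$ one bounds the infimum from above by $\lambda_{1,\beta}(\Omega)+k|\Omega|$, hence the three pieces of the energy---Dirichlet, jump, and $k|\{u_n>0\}|$---are uniformly bounded. The $L^\infty$-estimate of Lemma \ref{bfnt05}, applied to the penalized Rayleigh quotient underlying each $u_n$, gives a uniform $L^\infty$ bound. Setting $v_n:=u_n^2$, I appeal to Ambrosio's $SBV$-compactness theorem to extract a subsequence converging in $L^1(\rn)$ to some $v\in SBV(\rn)\cap L^\infty(\rn)$. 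Standard lower semicontinuity of the Dirichlet and jump terms in $SBV$, together with Fatou applied to $|\{v>0\}|$, and the constraints $v=0$ outside $\Omega$, $\int v\,dx=1$, show that $u:=\sqrt v\in SBV^{1/2}(\rn)$ minimizes \eqref{bfnt04}.

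\emph{Non-degeneracy.} Let $u$ be a normalized minimizer. For a parameter $\alpha>0$ I test with the truncated competitor $u_\alpha:=u\,\mathbf{1}_{\{u>\alpha\}}\in SBV^{1/2}(\rn)$, which vanishes outside $\Omega$ and whose jump set decomposes, up to $\Hm$-negligible parts, as
\[
J_{u_\alpha}=\bigl(J_u\cap\{u^->\alpha\}\bigr)\cup\bigl(\partial^*\{u>\alpha\}\setminus J_u\bigr),
\]
with $u_\alpha^-=0$ on the second piece and, for $\mathcal{H}^1$-a.e.\ $\alpha$, $u_\alpha^+=\alpha$ there by the coarea/slicing structure of $SBV^{1/2}$ functions. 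Comparing the Rayleigh ratio of $u_\alpha$ (after renormalization by $\|u_\alpha\|_{L^2}^{2}\geq 1-\alpha^2|\{0<u\leq\alpha\}|$) with that of $u$ and tracking the measure term $k|\{u>0\}|$, the minimality of $u$ yields, for $\mathcal{H}^1$-a.e.\ $\alpha$,
\[
\int_{\{0<u\leq\alpha\}}|\nabla u|^2\,dx + k\bigl|\{0<u\leq\alpha\}\bigr| \leq \beta\alpha^2\,\Hm\bigl(\partial^*\{u>\alpha\}\setminus J_u\bigr) + C\alpha^2\bigl|\{0<u\leq\alpha\}\bigr|,
\]
with $C$ depending only on $N,\beta,k,\lambda_{1,\beta}(\Omega),|\Omega|$ (through $\|u\|_\infty$ via Lemma \ref{bfnt05}). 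For $\alpha$ small the last term is absorbed into the left-hand side; integrating in $\alpha\in(0,\alpha_0)$ and using the coarea inequality
\[
\int_0^{\alpha_0}\Hm\bigl(\partial^*\{u>s\}\setminus J_u\bigr)\,ds \leq \int_{\{0<u\leq\alpha_0\}}|\nabla u|\,dx
\]
combined with Cauchy--Schwarz forces $|\{0<u\leq\alpha_0\}|=0$ whenever $\alpha_0$ is below an explicit threshold $\alpha$ depending only on the data, which is the content of \eqref{uniformbd}.

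\emph{Main obstacle.} The real difficulty is not the overall scheme but the bookkeeping of $J_{u_\alpha}$ for $\mathcal{H}^1$-almost every $\alpha$, and in particular the identification $u_\alpha^+=\alpha$ on $\partial^*\{u>\alpha\}\setminus J_u$, which is a genuinely $SBV^{1/2}$ fact. For this I would rely on the slicing and coarea results proved in \cite{BG10,BG15,BG15AIHP}, and for the absorption of the newly created jump contribution against the favorable terms $-k|\{0<u\leq\alpha\}|$ and $-\int_{\{0<u\leq\alpha\}}|\nabla u|^2\,dx$ I would follow the Alt--Caffarelli-type truncation strategy of \cite{CK}. It is precisely this step that produces the explicit---though a priori non-optimal---dependence of $\alpha$ on $N,\beta,k,\lambda_{1,\beta}(\Omega)$ and $|\Omega|$.
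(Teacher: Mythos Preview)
Your proposal is correct and follows essentially the same route as the paper: existence by Ambrosio-type compactness and lower semicontinuity in $SBV^{1/2}$ (as in \cite{BG10}), and non-degeneracy by testing the minimizer against the renormalized truncation $u\,\mathbf 1_{\{u>t\}}$, which is precisely what the paper does. The only structural difference is cosmetic: the paper first shows that minimality in \eqref{bfnt04} forces $F_{\beta,k/2}(u)\le F_{\beta,k/2}(u\,\mathbf 1_{\{u>t\}})$ for all small $t$ and then invokes Lemma~\ref{bfnt11} (the Caffarelli--Kriventsov argument from \cite{CK}) as a black box, whereas you unfold that coarea/absorption argument inline.

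Two small technical caveats, neither of which affects the strategy. First, Lemma~\ref{bfnt05} is stated for eigenfunctions on a fixed set, not for an arbitrary minimizing sequence of \eqref{bfnt04}; to get the uniform $L^\infty$ bound on $(u_n)$ you should either replace each $u_n$ by the first eigenfunction on $\{u_n>0\}$ (this does not increase the energy and makes Lemma~\ref{bfnt05} applicable with uniformly controlled data), or argue directly by truncation as in \cite{BG10}. Second, your decomposition of $J_{u_\alpha}$ omits the piece $J_u\cap\{u^-\le\alpha<u^+\}$; this is harmless for your inequality since the jump energy of $u_\alpha$ there is $\beta (u^+)^2\le \beta\bigl((u^+)^2+(u^-)^2\bigr)$, i.e.\ a favorable term you may simply drop, but it should be mentioned. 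The paper sidesteps both bookkeeping issues by delegating them to \cite{BG10} and \cite{CK} respectively.
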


 This result describes a sort of non-degeneracy of the solution $u$ at the free boundary. This nondegeneracy result  was first proved for  a generic minimizer of a free discontinuity problem of Robin type in \cite[Theorem 4.1]{BL14} and generalized in \cite[Theorem 6.13]{BG15} and \cite[Theorem 3.5]{BG15AIHP} in different directions.

A recent, simpler proof was given in 
 \cite[Theorem 3.2]{CK}, and has the advantage of providing an accurate estimate of the lower bound. Precisely, the following holds.
 \begin{lemma}
 \label{bfnt11}
 Let $u \in SBV^{  \frac 12} (\rn)$ be such that $|\{u>0\}|<+\infty$ and such that for almost every $t_0\ge t >0$
 $$F_{\beta,k}(u) \le F_{\beta,k}(u\cdot 1_{\{u >t\}}),$$
 where
 $$F_{\beta,k}(v)= \int_{\rn} |\nabla v|^2 dx+ \beta \int_{J_v}  |v^+|^2+|v^-|^2 \, d\h +k |\{v>0\}|.$$
 Then, there exists $\alpha >0$ depending only on $N,\beta,k,F(u), t_0$ such that
 $$\forall a.e. \, x \in \{u>0\}
 \; \;\;\; u(x) \ge \alpha.$$
 \end{lemma}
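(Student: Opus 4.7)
The plan is to follow the scheme of \cite{CK}: convert the truncation hypothesis into a differential inequality for a well-chosen monotone quantity and then close the argument by an isoperimetric balancing. For $t \in (0, t_0]$ introduce the non-decreasing functions
$$\mu(t) := |\{0 < u \leq t\}|, \quad G(t) := \int_{\{0 < u \leq t\}} |\nabla u|^2\, dx, \quad \psi(t) := k\,\mu(t) + G(t).$$
The first step is direct energy bookkeeping applied to the test function $v_t := u\cdot 1_{\{u>t\}}$: the difference $F_{\beta,k}(u) - F_{\beta,k}(v_t)$ collects, as a ``loss'', the gradient energy $G(t)$ and the measure penalty $k\mu(t)$ together with the nonnegative jump contributions on $J_u\cap\{u^+\leq t\}$ and the lower-side terms on $J_u\cap\{u^-\leq t<u^+\}$, and, as a ``gain'', the newly created jump of amplitude $t$ along the continuous part of the level set, namely $\beta t^2 \mathcal{H}^{n-1}(\partial^*\{u>t\}\setminus J_u)$. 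Imposing $F_{\beta,k}(u)\leq F_{\beta,k}(v_t)$ and discarding the nonnegative jump contributions yields, for a.e.\ $t\in(0,t_0)$,
$$k\mu(t) + G(t) \;\leq\; \beta\, t^2\, \mathcal{H}^{n-1}(\partial^*\{u>t\}\setminus J_u). \qquad (\star)$$

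The second step derives a differential inequality for $\psi$. The SBV coarea formula gives, for a.e.\ $t$,
$$\mu'(t) = \int_{\partial^*\{u>t\}\setminus J_u} |\nabla u|^{-1}\, d\mathcal{H}^{n-1}, \qquad G'(t) = \int_{\partial^*\{u>t\}\setminus J_u} |\nabla u|\, d\mathcal{H}^{n-1},$$
so Cauchy--Schwarz produces $\mathcal{H}^{n-1}(\partial^*\{u>t\}\setminus J_u)^2 \leq \mu'(t) G'(t)$. Squaring $(\star)$ and invoking the AM--GM bound $\sqrt{k\mu'(t)G'(t)} \leq \tfrac{1}{2}\psi'(t)$ leads to
$$\psi(t) \;\leq\; \frac{\beta\, t^2}{2\sqrt{k}}\, \psi'(t),$$
valid wherever $\psi>0$, i.e.\ $(\log \psi)'(t) \geq 2\sqrt{k}/(\beta t^2)$. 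Integrating from $t$ up to $t_0$ produces the super-polynomial decay
$$\psi(t) \;\leq\; \psi(t_0)\, \exp\!\left(-\tfrac{2\sqrt{k}}{\beta}\bigl(\tfrac{1}{t} - \tfrac{1}{t_0}\bigr)\right).$$
Since $\psi(t_0)$ is controlled by $F_{\beta,k}(u) + k|\{u>0\}|$, the bound $\mu\leq \psi/k$ gives a quantitative super-exponential upper bound on $\mu(t)$.

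The final step, which I expect to be the main obstacle, is to convert this upper bound on $\mu$ into the actual vanishing $\mu\equiv 0$ on some interval $(0,\alpha]$ with $\alpha=\alpha(N,\beta,k,F_{\beta,k}(u),t_0)$. The super-polynomial decay by itself is consistent with $\mu$ staying positive but superexponentially small, so an additional isoperimetric lower bound on the continuous part of the level set is needed. The idea is that, as soon as $\mu(t)>0$, the isoperimetric inequality applied to the positive-measure set $\{0<u\leq t\}$ forces $\mathcal{H}^{n-1}(\partial^*\{u>t\}\setminus J_u) \geq c_N\,\mu(t)^{(N-1)/N}$, up to boundary artefacts controllable via the coarea bound $\int_0^{t_0}\mathcal{H}^{n-1}(\partial^*\{u>t\}\setminus J_u)\, dt \leq \int|\nabla u|\, dx$. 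Feeding this rigidity back into $(\star)$ and matching it against the super-exponential upper bound on $\mu$ makes the two estimates incompatible once $t$ drops below an explicit threshold $\alpha$, which forces $\mu(t)=0$ for $t<\alpha$ and hence $u\geq \alpha$ a.e.\ on $\{u>0\}$. This sharp balancing between the ODE-driven exponential decay and the isoperimetric lower bound is the technical heart of \cite[Theorem 3.2]{CK}, from which the explicit form of $\alpha$ required by Lemma \ref{bfnt11} is obtained.
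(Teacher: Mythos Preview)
The paper gives no proof of this lemma beyond the single sentence ``The proof of Lemma~\ref{bfnt11} follows step by step \cite[Theorem 3.2]{CK}'', so the only meaningful comparison is whether your outline faithfully reconstructs the Caffarelli--Kriventsov argument. Your Steps~1 and~2 --- the energy bookkeeping that produces $(\star)$ and the differential inequality $\psi(t)\le \tfrac{\beta t^{2}}{2\sqrt{k}}\psi'(t)$ obtained from coarea plus Cauchy--Schwarz and AM--GM --- are correct and do capture the backbone of \cite{CK}; in particular the super-exponential decay of $\psi$ follows exactly as you write.

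Step~3, however, contains a genuine gap that you partly flag but do not resolve. The isoperimetric inequality applied to $\{0<u\le t\}$ bounds its \emph{full} reduced boundary, and that boundary contains, besides the smooth level set $\partial^{*}\{u>t\}\setminus J_{u}$, the pieces $J_{u}\cap\{u^{-}=0,\,0<u^{+}\le t\}$ and $J_{u}\cap\{0<u^{-}\le t<u^{+}\}$. In the present $SBV^{1/2}$ framework only $\int_{J_{u}}(|u^{+}|^{2}+|u^{-}|^{2})$ is finite, not $\mathcal{H}^{N-1}(J_{u})$; on the pieces above $u^{\pm}$ may be arbitrarily small, so no bound on their $\mathcal{H}^{N-1}$-measure follows from the energy, and the inequality $H(t)\ge c_{N}\mu(t)^{(N-1)/N}$ you assert does not follow. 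The ``coarea bound'' you invoke concerns $\int_{0}^{t_{0}}H(t)\,dt$ and says nothing about these $J_{u}$-pieces. Worse, even if one granted the isoperimetric lower bound on $H$, your closing mechanism does not produce a contradiction: $(\star)$ gives $k\mu\le \beta t^{2}H$, the ODE gives an \emph{upper} bound on $\mu$, and the isoperimetric claim gives a \emph{lower} bound on $H$ --- all three are perfectly compatible with $\mu$ small but strictly positive, so nothing forces $\mu$ to vanish below a threshold. The passage from super-exponential decay to actual vanishing in \cite{CK} requires a sharper interplay between the gradient term $G$ and the measure term $\mu$ in $(\star)$ than the one you sketch; as written, your Step~3 is a plausible heuristic but not a proof.
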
 
 The proof of  Lemma \ref{bfnt11} follows step by step  \cite[Theorem 3.2]{CK}.
  
 \begin{proof}(of Proposition \ref{bfnt10})
 
 \noindent {\bf Existence of minimizers.} The existence of a minimizer of \eqref{bfnt04} comes by compactness arguments and the Ambrosio lower semicontinuity theorem. One follows Proposition 1, Section 3 from  \cite{BG10}. 
 
 \noindent {\bf Uniform non-degeneracy.} 
 Let $u$ be a minimizer of \eqref{bfnt04}.   We shall test its optimality with
 $$u_t=u\cdot 1_{\{u>t\}},$$
 rescaled such that its $L^2$-norm becomes equal to $1$. 
 
 Assume $|\Omega|=m$ and fix $\varepsilon _0>0$ be such that
 $$\frac{1}{1-\varepsilon _0^2 m}\le 1+2 \varepsilon _0^2 m,$$
 \begin{equation}
 \label{eps}
 \varepsilon _0^2 (\lambda_{1,\beta}(\Omega)+ k|\Omega|)<\frac 14. \end{equation}
 Then
 $$\frac {1}{\int_{\R^N} u_t^2 dx}\le 1+2 t^2 |\{0<u<t\}|,$$ and we can write from the optimality of $u$ in 
 \eqref{bfnt04}
 $$\int_{\rn} |\nabla u|^2 \, dx + \beta  \displaystyle\int_{J_u}( |u^+|^2+|u^-|^2 )\, d\h +k |\{u>0\}|\le$$
 $$\le  \Big (\int_{\rn} |\nabla u_t|^2 \, dx + \beta  \displaystyle\int_{J_{u_t}}( |u_t^+|^2+|u_t^-|^2 )\, d\h\Big )(1+2 t^2 |\{0<u<t\}|)  +k |\{u_t>0\}|\le$$
 $$\le \int_{\{u_t>0\}} |\nabla u_t|^2 \, dx + \beta  \displaystyle\int_{J_{u_t}}( |u_t^+|^2+|u_t^-|^2)\, d\h +\hskip 3cm $$
 $$\hskip 3cm + 4 t^2 |\{0<u<t\} | (\lambda_{1,\beta}(\Omega) +k|\Omega|)+k |\{u_t>0\}|.$$
 In the last inequality we assumed that 
\begin{equation}\label{bfnt12}
 \int_{\{u_t>0\}} |\nabla u_t|^2 \, dx + \beta  \displaystyle\int_{J_{u_t}}( |u_t^+|^2+|u_t^-|^2)\, d\h\le 2 (\lambda_{1,\beta}(\Omega) +k|\Omega|).\end{equation}
 
 For $0<t< \varepsilon_0\sqrt{\frac {k}{2}}= t_0(\lambda_{1,\beta}(\Omega),k,|\Omega|)$, using \eqref{eps},  \ we get
 \begin{equation}\label{bfnt13}
F_{\beta,{\frac{k}{2}}}(u)\le F_{\beta,{\frac{k}{2}}}(u\cdot 1_{\{u>t\}}).
\end{equation}
If \eqref{bfnt12} fails to be true, then \eqref{bfnt13} is trivially satisfied.

We conclude the proof on the basis of Lemma \ref{bfnt11}.
 \end{proof}

The nondegeneracy result proved above plays a crucial role in the existence result for the auxiliary free discontinuity problem \eqref{auxiliary}. The existence result has been proved in \cite{BG15}, in the case $\Omega=\R^N$. 
\begin{proposition}
\label{uniformth}
For every $k>0$ problem \eqref{auxiliary} admits a solution $A$. 
\end{proposition}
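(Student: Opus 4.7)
The strategy is to produce the open set $A$ by regularizing the support of a minimizer $u$ of the relaxed SBV problem \eqref{bfnt04}, which exists together with the uniform lower bound \eqref{uniformbd} by Proposition~\ref{bfnt10}. First I would observe that the infimum in \eqref{auxiliary} is bounded below by the minimum of \eqref{bfnt04}: any admissible open $A \subset \Omega$ provides, via a nonnegative $L^2$-normalized first eigenfunction extended by $0$ outside $A$, a competitor in \eqref{bfnt04} with the same value. Hence it suffices to exhibit an open set $A \subset \Omega$ for which $u$ itself realizes the eigenvalue on $A$ in the sense of Definition~\ref{bfnt07} and for which $|A| = |\{u > 0\}|$.

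The key step is to upgrade the non-degeneracy \eqref{uniformbd} to a lower Ahlfors regularity estimate of the form
\begin{equation*}
\Hm(J_u \cap B_r(x)) \ge c\, r^{N-1}
\end{equation*}
for $\Hm$-a.e.\ $x \in J_u$ and all sufficiently small $r$, with $c$ depending only on $N,\beta,k,\alpha$. The argument is by contradiction and competitor construction: if the density of $J_u$ in some small ball $B_r(x)$ is too small, one replaces $u$ on $B_r(x)$ by a function that suppresses the jump there (e.g.\ by filling with a harmonic extension or by zeroing $u$ on $B_r(x)$ and paying only the $k|\{u>0\}|$ term). The uniform lower bound $u \ge \alpha$ on $\{u>0\}$ converts a small saving in the jump energy into a strict decrease of the full functional $F_{\beta,k}$, contradicting the minimality of $u$. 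This is the main technical obstacle and is where one has to import the machinery from \cite{BG15AIHP,BL14,BG15}; the presence of both the Robin trace term and the volume penalization $k|\{u>0\}|$ means the competitors must be tuned so that neither term can reabsorb the gain.

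Once lower Ahlfors regularity is established, a standard density argument gives $\Hm(\overline{J_u}\setminus J_u) = 0$, i.e.\ $J_u$ is essentially closed. I would then set
\begin{equation*}
A := \{u > 0\} \setminus \overline{J_u}.
\end{equation*}
Outside $\overline{J_u}$ the function $u$ belongs to $H^1_{\mathrm{loc}}$, so by non-degeneracy the set $\{u>0\} \setminus \overline{J_u}$ coincides with a connected component structure on which $u$ is locally bounded away from $0$; combined with the openness of $\R^N \setminus \overline{J_u}$ this gives $A$ open. By construction $A \subset \Omega$ up to a null set, $J_u \subset \partial A$, and $u$ is admissible in \eqref{weak} on $A$.

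Finally I would check optimality: using $u$ as a test function in \eqref{weak} for $A$ yields $\lambda_{1,\beta}(A) \le \int|\nabla u|^2 + \beta\int_{J_u}(|u^+|^2+|u^-|^2)\,d\Hm$, and adding $k|A| = k|\{u>0\}|$ shows that $\lambda_{1,\beta}(A) + k|A|$ does not exceed the relaxed minimum. Combined with the lower bound from the first step, this gives equality and proves that $A$ solves \eqref{auxiliary}.
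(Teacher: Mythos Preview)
Your proposal is correct and follows the same route as the paper, which simply records (deferring to \cite{BG15,BL14}) that the SBV minimizer $u$ from Proposition~\ref{bfnt10} is locally an almost quasi-minimizer of the Mumford--Shah functional in $\Omega$, hence $J_u$ is essentially closed and $A:=\{u>0\}\setminus J_u$ is the desired open set. The only point you omit is the paper's remark that $A$ must be connected (otherwise restricting to a single connected component strictly lowers the energy), which is needed later in Step~2.
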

\begin{proof}

With respect to the case $\Omega=\R^N$ discussed in   \cite{BG15}, there is no any significant difference: 
\begin{itemize}
\item One takes a solution $u$ for \eqref{bfnt04}. 
\item The function $u$ belongs to $L^\infty$ and satisfies the lower estimate of Proposition \ref{bfnt10}.
\item Locally, in any ball contained in $\Omega$, the function $u$ is an almost quasi minimizer of the Mumford Shah functional (see \cite{BG15} and \cite{BL14}).  Consequently,  the jump set of the function $u$ is closed in $\Omega$. Identifying $A=\{u>0\}\setminus J_u$, we conclude the proof.
\item The set $A$ is connected, otherwise a connected component of the set $\{u>0\}\setminus J_u$ would give strictly lower energy.
\end{itemize} 
\end{proof}

\medskip

\noindent{\bf Step 2.}
Let $A$ be an open set of finite measure  and $B$ be a  ball having the  same volume as $A$. Assume its radius is $R$. We denote by $u_{B}$ a positive eigenfunction on $B$, which is radially symmetric. If $|x|=r$, we set
\[
\beta_r=  \varphi_B(x) = \frac{|\nabla u_B|}{u_B}(x)
\]
is well defined and strictly increasing for $r \in (0,R)$ with $\beta_0=0$ and  $\beta_R=\beta$ (see Proposition 4. 2 in \cite{BD}).
Given $t \in (0,||u||_{L^\infty})$ we define $r(t)$ to be the radius of the ball with the same center as $B$ and  the same volume as $U_t$.  Then we define

\begin{equation}
\label{varfi}
\varphi(x) = \beta_{r(t)}
\end{equation}
if $ x \in \Omega$ and $u(x)=t$.

\begin{lemma}
\label{step1}
Let $A$ be an open, connected set of finite measure and  $u$ a nonnegative, $L^2$-normalized  eigenfunction corresponding to $\lambda_{1,\beta}(A)$. Then
\begin{equation}
\label{eigenineq}
\lambda_{1, \beta} (A)-\lambda_{1, \beta} (B)\ge   \frac \beta2 (ess \inf_{x\in A} u(x))^2\big  ({\mathcal H}^{N-1}(\partial^* A)-{\mathcal H}^{N-1}(\partial B)\big ),
\end{equation}
\end{lemma}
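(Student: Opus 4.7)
The proof refines the classical Bossel--Daners argument by extracting the isoperimetric deficit of $A$ from the low-level-set regime of the eigenfunction $u$. I take the test function $\varphi(x)=\beta_{r(u(x))}$ of \eqref{varfi}, apply the preceding proposition to obtain, for a.e.\ $t\in(0,\|u\|_{L^\infty})$,
\[
\lambda_{1,\beta}(A)\;\ge\; H_A(U_t,\varphi) \;+\; \frac{1}{t|U_t|}\frac{d}{dt}\bigl(t^2F(t)\bigr),
\]
multiply by $2t|U_t|$ and integrate in $t$. Since $\int_0^\infty 2t|U_t|\,dt=\|u\|_{L^2}^2=1$, the left-hand side becomes $\lambda_{1,\beta}(A)$, while the integrated correction reduces to $2[t^2F(t)]_0^\infty$, which vanishes.

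The algebraic core is the ball-reduction identity $\lambda_{1,\beta}(B_R)=\lambda_{1,\beta_r}(B_r)$ for all $r\in(0,R]$, obtained by observing that $u_B|_{B_r}/u_B(r)$ is the Robin eigenfunction of $B_r$ with parameter $\beta_r$. Rewriting this via the divergence theorem gives
\[
\lambda_{1,\beta}(B)\,|B^{|U_t|}|\;=\;\beta_{r(t)}\,\mathcal{H}^{N-1}(\partial B^{|U_t|})\;-\;\int_0^{r(t)}\beta_s^2\,N\omega_N s^{N-1}\,ds.
\]
A change of variable $r=r(s)$ shows $\int_{U_t}\beta_{r(u)}^2\,dx=\int_0^{r(t)}\beta_s^2\,N\omega_N s^{N-1}\,ds$, so the ``interior'' $\varphi^2$-contributions cancel. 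Under the natural orientation $u^-=0$ outside $A$, the remaining comparison reads
\[
\bigl(H_A(U_t,\varphi)-\lambda_{1,\beta}(B)\bigr)|U_t|=\beta_{r(t)}\mathcal{H}^{N-1}(\partial^*U_t\setminus J_u)+\beta\,\mathcal{H}^{N-1}(J_u\cap\{u^+>t\})-\beta_{r(t)}\mathcal{H}^{N-1}(\partial B^{|U_t|}).
\]
Monotonicity $\beta_{r(t)}\le\beta$ combined with the isoperimetric inequality $\mathcal{H}^{N-1}(\partial^*U_t)\ge\mathcal{H}^{N-1}(\partial B^{|U_t|})$ shows that this quantity is non-negative pointwise in $t$---recovering classical Bossel--Daners.

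The quantitative improvement then comes from the range $t\in(0,t_0]$ with $t_0:=\mathrm{ess\,inf}_Au$: there $U_t=A$ essentially, $r(t)=R$, and $\beta_{r(t)}=\beta$, so the expression above equals $\beta\bigl(\mathcal{H}^{N-1}(\partial^*A)-\mathcal{H}^{N-1}(\partial B)\bigr)$. Restricting the $t$-integration to $(0,t_0)$ and discarding the non-negative contribution from $(t_0,\|u\|_{L^\infty})$ yields
\[
\lambda_{1,\beta}(A)-\lambda_{1,\beta}(B)\;\ge\;\beta\,t_0^2\,\bigl(\mathcal{H}^{N-1}(\partial^*A)-\mathcal{H}^{N-1}(\partial B)\bigr),
\]
which implies the claim.

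The main obstacle is purely a bookkeeping matter within the $SBV$ framework: one must split $\partial^*U_t$ cleanly into its continuous part $\partial^*U_t\setminus J_u$ and its jump part $J_u\cap\{u^+>t\}$, argue that any $J_u\setminus\partial^*A$ contributes only additional non-negativity, justify the one-dimensional change of variable $r=r(s)$ through the coarea-type monotonicity of $t\mapsto|U_t|$, and check the vanishing of the boundary term $[t^2F(t)]_0^\infty$. Each of these adapts directly the corresponding step in the $SBV$ version of the Bossel--Daners proof; connectedness of $A$ is used only to ensure that level sets $U_t$ do not decouple.
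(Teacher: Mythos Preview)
Your argument is correct and follows essentially the same route as the paper's own proof: apply Proposition~2.3 with the transplanted radial test function $\varphi=\beta_{r(t)}$, use Proposition~2.2 on the ball to cancel the interior $\varphi^2$ terms, multiply by $t|U_t|$, integrate in $t$ so that the $t^2F(t)$ boundary term vanishes, and then exploit that for $t<u_{\min}$ one has $U_t=A$, $r(t)=R$, $\beta_{r(t)}=\beta$, while for larger $t$ the integrand is nonnegative by the isoperimetric inequality together with $\beta_{r(t)}\le\beta$. Your presentation adds a little more detail (the ball identity $\lambda_{1,\beta}(B_R)=\lambda_{1,\beta_r}(B_r)$ and the explicit change of variable for the $\varphi^2$ integral), and, like the paper's computation, it actually yields the constant $\beta$ rather than the weaker $\beta/2$ stated in the lemma.
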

As a consequence of the quantitative isoperimetric inequality of \cite{FMP08}, the following   corollary holds. Let us denote  $u_{min} = ess \inf_{x\in A} u(x)$.
\begin{corollary}
Under the hypotheses of Lemma \ref{step1} we have
\begin{equation}
\label{eigenineq.1}
\left[  \lambda_{1,\beta}(A) - \lambda_{1,\beta}(B) \right] \ge  C(N)|A|^\frac{N-1}{N} {u_{min}^2} \beta  {\mathcal A (A)^2},
\end{equation}
for some constant $C(N)$ depending only on the dimension of the space.

\end{corollary}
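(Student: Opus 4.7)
The plan is essentially a one-line combination of Lemma~\ref{step1} with the quantitative isoperimetric inequality of Fusco--Maggi--Pratelli \cite{FMP08}, after handling the finite-perimeter dichotomy.

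First I would split into two cases according to whether $A$ has finite perimeter or not. If $A$ does not have finite perimeter, then $\mathcal{H}^{N-1}(\partial^*A)$ is $+\infty$ but, as stated in Step~2 of the introduction (and encoded in Lemma~\ref{step1} via the convention $0\cdot\infty=0$), the essential infimum $u_{min}$ must vanish. Hence $u_{min}^2\,\mathcal A(A)^2=0$ and the asserted inequality \eqref{eigenineq.1} holds trivially.

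Now assume $A$ has finite perimeter. Since $A$ is an open set, the distributional perimeter coincides with $\mathcal{H}^{N-1}(\partial^* A)$, and the same for $B$. The quantitative isoperimetric inequality of \cite{FMP08} reads
\[
\mathrm{Per}(A)-\mathrm{Per}(B) \;\ge\; C(N)\,\mathrm{Per}(B)\,\mathcal A(A)^2
\]
for every set $A$ of finite perimeter and $B$ a ball with $|B|=|A|$, where $C(N)$ depends only on the dimension. Using the explicit value $\mathrm{Per}(B)=N\omega_N^{1/N}|A|^{(N-1)/N}$, this becomes
\[
\mathcal{H}^{N-1}(\partial^* A)-\mathcal{H}^{N-1}(\partial B)\;\ge\; \widetilde C(N)\,|A|^{(N-1)/N}\,\mathcal A(A)^2.
\]

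Plugging this lower bound into the conclusion \eqref{eigenineq} of Lemma~\ref{step1} yields
\[
\lambda_{1,\beta}(A)-\lambda_{1,\beta}(B)\;\ge\; \frac{\beta}{2}\,u_{min}^2\,\widetilde C(N)\,|A|^{(N-1)/N}\,\mathcal A(A)^2,
\]
which is precisely \eqref{eigenineq.1} after absorbing the factor $1/2$ into the dimensional constant. There is no real obstacle: the non-degeneracy of $u_{min}$ has been taken care of by the earlier steps, and the boundary deficit term in Lemma~\ref{step1} is exactly the quantity controlled by the quantitative isoperimetric inequality, with the sharp exponent $2$ on the Fraenkel asymmetry automatically passing through to the eigenvalue deficit.
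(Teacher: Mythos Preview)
Your proposal is correct and follows essentially the same route as the paper: the paper's argument for the corollary is the two-line chain displayed right after the proof of Lemma~\ref{step1}, which simply feeds the sharp quantitative isoperimetric inequality $\mathcal{H}^{N-1}(\partial^*A)-\mathcal{H}^{N-1}(\partial B)\ge C(N)|A|^{(N-1)/N}\mathcal A(A)^2$ into \eqref{eigenineq}. Your added dichotomy on finite versus infinite perimeter is already absorbed in the paper into the $0\cdot\infty=0$ convention discussed at the start of the lemma's proof.
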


\begin{proof} (of Lemma \ref{step1}.) Before beginning the proof, let us distinguish between the cases $ess \inf_{x\in A} u(x)=0$ and $ess \inf_{x\in A} u(x)>0$. If $ess \inf_{x\in A} u(x)=0$, then inequality \eqref{eigenineq} is true, as a consequence of the convention $0\cdot \infty =0$. If $ess \inf_{x\in A} u(x)>0$ we claim that $A$ is a set with finite perimeter, i.e. ${\mathcal H}^{N-1}(\partial^* A)<+\infty$. Indeed, since $ess \inf_{x\in A} u(x)>0$, the function  $1_A$ is an element of $SBV^\frac 12(\rn)$ and in particular that $1_A$ is an element of $SBV(\R^N)$. 

From Proposition 2.2 we have, for almost all $t \in (0,||u||_{L^\infty})$, 
\begin{equation}
\label{eigenball}
\lambda_{1,\beta}(B) = H_B(B_{r(t)}, \varphi_B)= \frac{1}{|B_{r(t)}|}\left ( \int_{\partial B_{r(t)}}  \frac{|\nabla u_B|}{u_B}\, d \Hm - \int_{B_{r(t)}} \frac{|\nabla u_B|^2}{u_B^2} \, dx  \right),
\end{equation}

 Proposition 2.3 with $\varphi$ defined in \eqref{varfi} and equality \eqref{eigenball}, ensures that for almost all $t \in (0,||u||_{L^\infty})$
\begin{equation}
\label{ineq0}
\begin{array}{l}
 \lambda_{1,\beta} (\Omega) - \lambda_{1,\beta}(B) \ge   \displaystyle \frac{1}{t |U_t|} \frac{d}{dt}(t^2 F(t)) +  \displaystyle   \frac{1}{|U_t|}  \left( \displaystyle\int_{\partial ^* U_t\setminus J_u} \varphi \,d \Hm - \displaystyle \int_{U_t} \varphi^2 \, dx\right)  +   \\ \hfill
+ \displaystyle \frac{1}{|U_t|} \left( \beta  \big (\Hm(J_u \cap \{ u^+ >t\}) + \Hm(J_u \cap \{ u^- >t\}) \big  ) \right)  \\ \hfill
 -  \displaystyle\frac{1}{|B_{r(t)}|}\left ( \displaystyle\int_{\partial B_{r(t)}}  \displaystyle\frac{|\nabla u_B|}{u_B}\, d \Hm - \displaystyle\int_{B_{r(t)}} \displaystyle\frac{|\nabla u_B|^2}{u_B^2} \, dx  \right).
\end{array}
\end{equation} 

Now recalling that  for  $t \in (0,||u||_{L^\infty})$ 
\begin{itemize}
\item $\varphi(x) = \beta_{r(t)}= \displaystyle\frac{|\nabla u_B|}{u_B}(r(t))$ on $ \partial ^* U_t\setminus \partial ^* A$, 
\smallskip
\item $ \displaystyle\frac{|\nabla u_B|}{u_B}(r(t))=\beta_{r(t)} <\beta$, 
\smallskip
\item $|U_t|=|B_{r(t)}|$,
\smallskip
\item $\displaystyle\int_{U_t} \varphi^2 \, dx= \displaystyle\int_{B_{r(t)}} \displaystyle\frac{|\nabla u_B|^2}{u_B^2} \, dx$ 
\end{itemize}

using \eqref{ineq0} we get for almost every $ t \in (0,||u||_{L^{\infty}})$
\begin{equation}
\label{firstineq}
t|U_t| \left[  \lambda_{1,\beta} (A) - \lambda_{1,\beta}(B) \right] \ge t \frac{|\nabla u_B|}{u_B}(r(t)) \left[ \h(\partial ^*U_t) -\h (\partial B_{r(t)})\right] + 
\displaystyle\frac{d}{dt}(t^2 F(t)).
\end{equation}

Integrating the above inequality and using the fact that the function $t^2F(t)$ vanishes at the ends, we get
$$
\int_0^{||u||_{L^{\infty}}}t|U_t| \left[  \lambda_{1,\beta}(A) - \lambda_{1,\beta}(B) \right]\, dt \ge\hskip 3cm $$
$$\hskip 3cm \ge \int_0^{||u||_{L^{\infty}}} t \frac{|\nabla u_B|}{u_B}(r(t)) \left[ \h(\partial ^*U_t) -\h(\partial B_{r(t)})\right] \,dt.
$$

\end{proof}
Following  the sharp quantitative   isoperimetric inequality proved in \cite{FMP08}
$$
\frac{||u||_{L^2}^2}{2} \left[  \lambda_{1,\beta}(A) - \lambda_{1,\beta}(B) \right] \ge \frac{u_{min}^2}{2} \beta \left[ \Hm(\partial ^*A) - \Hm(\partial B)\right] \ge $$
$$ \hskip 5cm  \ge C(N)|A|^\frac{N-1}{N} \frac{u_{min}^2}{2} \beta  {\mathcal A (A)^2}.
$$

\noindent {\bf Step 3.}
Let us consider $\Omega \subset \R^N$, an open set of finite measure. In order to avoid confusion, below denote and $B^{|\Omega|}$ a ball of the same measure as $\Omega$. We assume that $\lambda_{1,\beta}(\Omega) \le 2\lambda_{1,\beta}(B^{|\Omega|})$, otherwise the quantitative inequality holds with a suitable constant.

In view Theorem 5.1,  Lemma 5.2 in \cite{BG15} and Lemma \ref{convexity}, there exists a constant $k=k(|\Omega|,N, \beta)$ such that
\[
\lambda_{1,\beta}(A) + k |A| \ge \lambda_{1,\beta}(B^{|\Omega|}) + k |B^{|\Omega|}| 
\] 
for all $A\subset \R^N$ open set of finite measure.

For the set $\Omega$, 
we denote by $\Ao$   the optimal set solving \eqref{auxiliary} with the the constant $k$ defined above, and by $u_{\Ao}$ a nonnegative eigenfunction associated to $\lambda_{1,\beta}(A_{opt})$ normalized in such a way that $||u_{\Ao}||_{L^2}=1$. We denote  $B^{\Ao}$ a ball of the same volume as $\Ao$ achieving the Fraenkel asymmetry .

By Faber-Krahn inequality \eqref{FK}, the choice of $k$ and Corollary \ref{uniformconvexity}, we have that there exists a positive constant $C(N, |\Omega|)$ such that

\begin{equation}
\label{convex}
\begin{array}{ll}
\lambda_{1,\beta}(\Ao) + k|\Ao| &\ge \lambda_{1,\beta}(B^{\Ao}) + k|B^{\Ao}|   \\
\cr 
& \ge \lambda_{1,\beta}(B^{|\Omega|}) + k|B^{|\Omega|}|  
 +C(N,|\Omega|)(|B^{|\Omega|}|-|B^{\Ao}|)^2.
\end{array}
\end{equation}
Moreover by  Lemma \ref{step1}, Proposition \ref{bfnt10}  and the minimality of $\Ao$ we have that there exists a positive constant $C(N, \beta,|\Omega|)$ such that
\begin{equation}
\label{minimality}
\begin{array}{ll}
\lambda_{1,\beta}(\Omega) +k |\Omega| &\ge\lambda_{1,\beta}(\Ao) + k|\Ao|   \ge \\
\cr
&\ge \lambda_{1,\beta}(B^{|\Ao|}) + k|B^{\Ao}| +C(N, \beta) \displaystyle \frac{|\Ao \Delta B^{\Ao}|^2}{|\Ao|^{1+\frac 1N}} \\
&\ge \lambda_{1,\beta}(B^{|\Ao|}) + k|B^{\Ao}| +C(N, \beta, |\Omega| ) \displaystyle |\Ao \Delta B^{\Ao}|^2
\end{array}
\end{equation}
and then joining \eqref{convex} and \eqref{minimality}
\begin{equation}
\label{final}
\begin{array}{ll}
\lambda_{1,\beta}(\Omega) &\ge  \lambda_{1,\beta}(B^{|\Omega|})+ C(N, \beta, |\Omega|)(|B^{|\Omega|} |-|B^{\Ao}| +|\Ao \Delta B^{\Ao}|)^2 \\ 
\cr
&\ge 
 \lambda_{1,\beta}(B^{|\Omega|})+ C(N, \beta, |\Omega|) \mathcal{A}^2(\Omega)
\end{array}
\end{equation}
where in the last inequality we used the very definition of Fraenkel asymmetry \eqref{fraenkel}, the optimality of $B_{\Ao}$ and the fact that $\Ao\subset \Omega$.

The constants  $C(N, \beta, |\Omega|)$ above do not depend explicitly on $k$ and $\lambda_{1,\beta}(\Omega)$ since $k$ depends only on $N, \beta$ and $|\Omega|$ and we have chosen to work with sets $\Omega$ such that $\lambda_{1,\beta}(\Omega) \le 2\lambda_{1,\beta}(B^{|\Omega|})$, so that the upper bound of $\lambda_{1,\beta}(\Omega)$ depends only on $N, \beta$ and $|\Omega|$.

\section{Further remarks}
We have a quite explicit dependence of the constant $c$ in \eqref{bfnt01}. A natural question is whether we can recover the inequality of \cite{BDe} when $\beta$ goes to $+\infty$. In fact, the important term to understand is $\alpha^2 \beta$, where $
\alpha$ is the constant given in Proposition \ref{bfnt10}.  Since in our use of this term, $k$ and $\lambda_{1,\beta}(\Omega)$ are controlled by $N, |\Omega|$ and $\beta$, at fixed measure,  the constant $\alpha$ depends fundamentally only on $\beta$ and $N$. A simple analysis shows that $\alpha^2 \beta$ vanishes as $\beta \rightarrow +\infty$.
Thus the answer is negative and can be explained by the method we use to prove Theorem \ref{main}, which is essentially different from the one in \cite{BDe}.  In our case, the effort is concentrated to compare the difference of the eigenvalues with the difference of the perimeters and to relay on the result of \cite{FMP08}, not to select sets which are close in $C^1$ distance to the ball. Our strategy, if used for Dirichlet boundary conditions, would merely lead to a non sharp power of the Fraenkel asymmetry. 

The power $2$ on the Fraenkel asymmetry in \eqref{bfnt01} is sharp. This value coincides with the powers appearing in the quantitative isoperimetric inequality, and the quantitative Faber-Krahn inequality for the first Dirichlet eigenvalue. In order to observe the sharpness, one can use the following perturbation of the ball in ellipsoids: let
$$E_\varepsilon =\Big \{(x_1,\dots, x_N) \in \R^N: \frac{x_1^2}{(1+\varepsilon)^2}+ \frac{x_1^2}{(1-\varepsilon)^2}+x_3^2+\dots x_N^2=1 \Big\}.$$
We denote
$$\tilde E_\varepsilon = (1-\varepsilon^2)^{-\frac 1N} E_\varepsilon,$$
the ellipsoid with the same volume as $B_1$.

We observe that
$${\mathcal A} (\tilde E_\varepsilon) \sim  c_1 \varepsilon^2,$$
$$\lambda_{1, \beta} (\tilde E_\varepsilon)-\lambda_{1, \beta} (B_1)\le c_2 \varepsilon^2.$$

An interesting question is whether one could avoid technicalities of SBV spaces in order to prove the quantitative inequality of Theorem \ref{main} provided we restrict to the class of Lipschitz sets. At this moment, it seems quite difficult to answer positively. To be precise, Steps 2 and 3 can be easily carried in the class of Lipscthiz sets. The difficulty comes from  the selection procedure of the set $A$ in Step 1. At this moment the authors do not see how to deal with Step 1 in a direct manner, remaining in the class of Lipschitz sets. 

Another question concerns the pertinence of the definition of the fundamental Robin eigenvalue in a nonsmooth open set with finite measure. We followed in this paper the framework developed in \cite{BG10,BG15}. The eigenvalue defined in this way  emerges as a natural relaxation of the eigenvalue and, in particular, coincides with the classical one in the case of Lipschitz sets and Lipschitz sets having $(N-1)$ dimensional Lipschitz cracks.  

An alternative way to define the eigenvalue in a nonsmooth, bounded, open set is to use the framework introduced by Daners in \cite{da00}. Given $A\subset \R^N$, 
the eigenvalue is defined to be
\begin{equation}
\label{bfnt15}
\overline \lambda_{1,\beta}  (A)=\inf \Big \{ \frac{\displaystyle \int_A |\nabla u|^2dx +\beta \int_{\partial A} u^2 d\Hm}{\displaystyle \int_A u^2 dx} : u \in H^1(A)\cap C(\overline A) \cap C^\infty (\Omega) \Big \}.
\end{equation}
One has in general that for a given open set of finite measure, the eigenvalue defined in Definition \ref{bfnt07} is not larger than the one defined by \eqref{bfnt15}   (see the remark at the end of Section 3 in \cite{BG10})
$$ \lambda_{1,\beta}  (A)\le \overline \lambda_{1,\beta}  (A).$$
In particular, this definition of the eigenvalue does not  take into account that an eigenfunction may have two possibly different traces on a crack. Although this eigenvalue does not capture some fine geometric properties, it has the advantage to be  the lowest eigenvalue corresponding to a suitably defined Laplace operator, which inherits good functional analytic properties. This is not the case of the fundamental eigenvalue defined in \eqref{weak}. 

As a conclusion, both the Faber-Krahn and and the quantitative Faber-Krahn inequalities hold as well for arbitrary open sets, if  one uses definition \eqref{bfnt15}.


\begin{thebibliography}{99}  

\bibitem{AS} M. Abramowitz, I.A. Stegun. {\it Handbook of mathematical functions with formulas, graphs, and mathematical tables}. National Bureau of Standards Applied Mathematics Series, 55, U.S. Government Printing Office, Washington, D.C. 1964

\bibitem{AFP00} L. Ambrosio, N.  Fusco, Nicola, D.  Pallara. Functions of bounded variation and free discontinuity problems.
Oxford Mathematical Monographs. The Clarendon Press, Oxford University Press, New York, 2000.

\bibitem{Bo2} M.H. Bossel. Membranes \'elastiquement li\'ees: inhomog\`enes ou sur une surface: une nouvelle extension du th\'eor\`eme isop\'erim\'etrique de Rayleigh-Faber-Krahn. Z. Angew. Math.
Phys. 39(5) (1988), 733--742.

\bibitem{BDP16} L. Brasco, G. De Philippis. Spectral inequalities in quantitative form. Preprint Arxiv  	arXiv:1604.05072, (2016).

\bibitem{BDPR12} L. Brasco, G. De Philippis, B. Ruffini.
Spectral optimization for the Stekloff-Laplacian: the stability issue.  J. Funct. Anal. 262 (2012), no. 11, 4675--4710.
35P15 (35J05 35J20)


\bibitem{BDe} L. Brasco, G. De Philippis, B. Velichkov. Faber-Krahn inequalities in sharp quantitative form. Faber-Krahn inequalities in sharp quantitative form. 
Duke Math. J. 164 (2015), no. 9, 1777--1831. 

\bibitem{BP12} L. Brasco, A.  Pratelli.
Sharp stability of some spectral inequalities. 
Geom. Funct. Anal. 22 (2012), no. 1, 107--135. 


\bibitem{BD} D. Bucur, D. Daners.  An alternative approach to the Faber-Krahn inequality for Robin problems.
Calc. Var. Partial Differential Equations 37 (2010),  75--86.

\bibitem{BG10} D. Bucur, A. Giacomini.  A variational approach to the isoperimetric inequality for the Robin eigenvalue problem.
Arch. Rational Mech. Anal. 198 (2010), 927--961.

\bibitem{BG15} D. Bucur, A. Giacomini.  Faber-Krahn inequalities for the Robin-Laplacian: A free discontinuity approach.
Arch. Rational Mech. Anal. 218 (2015),  757--824.

\bibitem{BG15AIHP} D. Bucur, A. Giacomini.  Shape optimization problems with Robin conditions on the free boundary. 

\bibitem{BL14} D. Bucur, S. Luckhaus.  Monotonicity formula and regularity for general free discontinuity problems.
Arch. Rational Mech. Anal. 211 (2014),  489--511.

\bibitem{CK} L.A. Caffarelli, D Kriventsov.  A free boundary problem related to thermal insulation.
Preprint (2016).

\bibitem{CL12} M. Cicalese, G.P. Leonardi.
A selection principle for the sharp quantitative isoperimetric inequality.
Arch. Ration. Mech. Anal. 206 (2012), no. 2, 617--643. 


\bibitem{D1} D. Daners.
A Faber-Krahn inequality for Robin problems in any space dimension.
Math. Ann.  333 (2006), 767--785.

\bibitem{D2} D. Daners.
Principal eigenvalues for generalized indefinite Robin problems. 
Potential Anal. 38 (2013), 1047--1069.

\bibitem{da00} D. Daners.
Robin boundary value problems on arbitrary domains.
Trans. Amer. Math. Soc. 352 (2000), no. 9, 4207--4236.


\bibitem{Fa} G. Faber.  Beweis, dass unter allen homogenen Membranen von gleicher Fl\"ache und gleicher Spannung die kreisf\"ormige den tiefsten Grundton gibt.
M\"unch. Ber. (1923), 169--172.

\bibitem{FMP10} A.  Figalli, F.  Maggi, A. Pratelli.
A mass transportation approach to quantitative isoperimetric inequalities. (English summary)
Invent. Math. 182 (2010), no. 1, 167--211. 

\bibitem{FMP08} N. Fusco, F. Maggi,  A. Pratelli. The sharp quantitative isoperimetric inequality. Annals of Math. 168 (2008), 941--980.


\bibitem{FMP09} N. Fusco, F. Maggi,  A. Pratelli. Stability estimates for certain Faber-Krahn, isocapacitary and Cheeger inequalities. Ann. Sc. Norm. Super. Pisa Cl. Sci. (5) 8 (2009), 51--71.

\bibitem{H} A. Henrot. {\it Extremum problems for eigenvalues of elliptic operators}. Frontiers in Mathematics, Birkh\"auser Verlag, Basel, 2006.

\bibitem{Ka} B. Kawohl, {\it Rearrangements and Convexity of Level Sets in PDE}, Lecture Notes in Math., vol. 1150, Springer-Verlag, Berlin, 1985.

\bibitem{Ke10} J. Kennedy.
On the isoperimetric problem for the Laplacian with Robin and Wentzell boundary conditions.
Bull. Aust. Math. Soc. 82 (2010), no. 2, 348--350. 


\bibitem{K} S. Kesavan. {\it Symmetrization \& applications}. Series in Analysis, 3, World Scientific Publishing Co. Pte. Ltd., Hackensack, NJ, 2006.

\bibitem{Kr1} E. Krahn, \"Uber eine von Rayleigh formulierte Minimaleigenschaft des Kreises, Math. Ann.
94 (1925), 97--100.





\end{thebibliography}
\end{document}